\newcommand{\realmathbb}{\mathbb}
\newtheorem{theorem}{Theorem}[section]
\newtheorem{lemma}[theorem]{Lemma}
\theoremstyle{definition}
\newtheorem{proposition}[theorem]{Proposition}
\newtheorem{corollary}[theorem]{Corollary}
\theoremstyle{remark}
\newtheorem{remark}[theorem]{Remark}
\numberwithin{equation}{section}
\begin{document}

\title[Contact Process under heavy-tailed renewals]{Contact Process under heavy-tailed renewals on finite graphs}


\author{Luiz Renato Fontes}
\address{
Instituto de  Matem\'{a}tica e estat\'{i}stica. Universidade de S\~{a}o Paulo, SP, Brazil.}
\curraddr{}
\email{lrfontes@usp.br}
\thanks{}

\author{Pablo Almeida Gomes}
\address{Instituto de Ci\^{e}ncias Exatas. Universidade Federal de Minas Gerais, MG, Brazil }
\curraddr{}
\email{pabloag@mat.ufmg.br}
\thanks{}

\author{Remy Sanchis}
\address{Instituto de Ci\^{e}ncias Exatas. Universidade Federal de Minas Gerais, MG, Brazil }
\curraddr{}
\email{rsanchis@mat.ufmg.br}
\thanks{}


\dedicatory{}

\commby{}



\begin{abstract}
{We investigate a non-Markovian analogue of the Harris contact process in a finite connected graph $G=(V,E)$: an individual is attached to each site $x \in V$, and it can be infected or healthy; the infection propagates to healthy neighbors just as in the usual contact process, according to independent exponential times with a fixed rate $\lambda>0$; however, the recovery times for an individual are given by the points of a renewal process attached to its timeline, whose waiting times have distribution $\mu$ such that $\mu(t,\infty) = t^{-\alpha}L(t)$, where $1/2 < \alpha < 1$ and $L(\cdot)$ is a slowly varying function; the renewal processes are assumed to be independent for different sites. We show that, starting with a single infected individual, if $|V| < 2 + (2\alpha -1)/[(1-\alpha)(2-\alpha)]$, then the infection does not survive for any $\lambda$; and if $|V| > 1/(1-\alpha)$, then, for every $\lambda$, the infection has positive probability to survive.}
\end{abstract}

\maketitle

\noindent AMS 2010 Subject Classifications: {60K35 ; 82B43}

\noindent Keywords and Phrases: {Contact process ; Percolation; Phase transition; Heavy tails}

\section{Introduction}

\noindent The contact process with renewal cures was the object of recent analyses in the literature; see \cite{FMMV2018} and \cite{FMV2018}.
Roughly speaking, there are two kinds of results, depending on the tail of the inter cure waiting time distribution, more precisely on the its index of polynomial decay --- let us call it $\alpha$. One kind is for $\alpha > 1$, stating in~\cite{FMV2018} that, under a certain monotonicity extra condition, the usual phase transition on the infection parameter, let us call it $\lambda$, is nontrivial, that is, there exists a nontrivial critical value $\lambda_c$ such that we have that the contact process on $\realmathbb{Z}$ started with a single infected individual dies out almost surely for $\lambda<\lambda_c$. The second kind of result holds for $\alpha<1$ and says in~\cite{FMMV2018} that, under certain fairly mild regularity conditions, $\lambda_c=0$, that is, we have survival of contact process with positive probability on any infinite graph and any $\lambda>0$. 

The present work looks again at the second situation above. The main factor behind the second kind of result is the occurrence with large probability of a \emph{tunneling event} at a large time, say T, where an infected individual located at the \emph{edge of the region currently explored by the infection} --- think of the underlying graph as $\realmathbb{Z}$, and the unexplored region as the one to the right of the infected individual in question --- sees a sequence of individuals whose current inter cure waiting times are of order $T$. This makes for larger and larger probability of spreading the infection to unexplored regions at larger and larger times, yielding positive probability of spreading it to all the space in the long run.

This picture suggests that we may not need infinite space to spread the infection forever, and that is the issue we analyse in the present article. We consider here a finite connected graph, and inter cure waiting time distributions in the basin of attraction of an $\alpha$-stable law, with $\alpha<1$, and give upper and lower bounds on critical size $k_c$ of the graph in terms of $\alpha$, above which we have survival as above, and below which the infection dies out for any $\lambda$. The bounds are quite sharp, leaving a gap of indetermination of size at most 1.

In comparison with the approach and conditions for the result of \cite{FMMV2018}, we believe that there is room for relaxing our sufficient conditions for survival for any $\lambda>0$, in the direction of the conditions of \cite{FMMV2018}, since the approaches are similar (but it is not so clear how an upper bound on $k_c$ would depend on $\alpha$). On the other hand, as will become clear below, our extinction result relies much more heavily on our regularity assumption on the
inter cure waiting time distribution.

\smallskip

The remainder of this article is organized in three more sections. We next describe the model, auxiliary results, and our bounds, which are collected in a single result, Theorem~\ref{th: principal}. The remaining two sections are devoted to our upper and lower bounds for $k_c$, one section for each, in this order.


\section{Preliminaries and Main Result}
\noindent Let us fix some notation and then define our model and describe our results.

We will consider versions of a renewal process $R = \{ S_n = T_1 + \ldots + T_n; ~n \in \realmathbb{N} \}$, with \textit{waiting times} $\{T_i\}_{i \in \realmathbb{N}}$ given as usual by i.i.d.~non-negative random variables. Let $U$ denote the associated \textit{renewal measure}, given, we recall, by $U(B) = \sum_{n \geq 1} P(S_n  \in B)$ for every Borel set $B \in \mathscr{B}(\realmathbb{R})$. For $t>0$, let $N(t) = \sup\{ n \in \realmathbb{N} ; S_n \leq t \}$ denote the number of renewals of $R$  up to time $t$. We also consider the \textit{current time} and \textit{excess time} at $t$ of $R$, given respectively by
\[ C(t) = t - S_{N(t)}\, \mbox{ and }\, E(t) = S_{N(t)+1} - t.\]

In this article, we will take the common probability distribution $\mu$ of the waiting times in the basin of attraction of an $\alpha$-stable law, that is, 
\begin{equation}\label{eq: stable}
\mu(t, \infty) = L(t)t^{-\alpha}, ~ t > 0,
\end{equation}     
where $L(\cdot)$ is a \textit{slowly varying function}, and $\alpha$ is a parameter, in principle, in $(0,1]$,
which in our context will be called \textit{cure index}. 

Let us recall two known results concerning renewal processes with such distribution, to be used below.

\begin{theorem}[Theorem 2 in \cite{E1970}]\label{th: ET}
	Let $\mu$ be as above with $1/2 < \alpha \leq 1$. Then, for every $h >0$, as $t \longrightarrow \infty$,
	\[
	U(t+h) - U(t) \sim \frac{C_{\alpha}h}{m(t)},
	\] 
	where $C_{\alpha} = [\Gamma(\alpha)\Gamma(2-\alpha)]^{-1}$ and $m(t) =  \int_0^{t} \mu(x,\infty)dx$.
\end{theorem}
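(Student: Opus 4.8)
The plan is to recognize this as Erickson's strong renewal theorem in the infinite‑mean regime and to reconstruct a proof in two stages: first, the first‑order (integrated) asymptotics of $U$ via Laplace transforms and Karamata's theorem; second, the \emph{local} upgrade of that statement, which is where the restriction $\alpha>1/2$ is really used. Throughout I write $F(t)=\mu([0,t])=1-\mu(t,\infty)$.

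For the first stage, recall that the renewal measure has Laplace transform $\int_0^\infty e^{-st}\,U(dt)=\hat\mu(s)/(1-\hat\mu(s))$ with $\hat\mu(s)=\int_0^\infty e^{-st}\,\mu(dt)$, and that integration by parts gives $1-\hat\mu(s)=s\int_0^\infty e^{-st}\mu(t,\infty)\,dt$. Since $\mu(t,\infty)=L(t)t^{-\alpha}$ is regularly varying of index $-\alpha\in(-1,0]$, the Abelian half of Karamata's theorem yields $1-\hat\mu(s)\sim\Gamma(1-\alpha)\,s^\alpha L(1/s)$ as $s\downarrow 0$ (for $\alpha=1$ the same scheme applies with $s^\alpha L(1/s)$ replaced by $s\,m(1/s)$, $m$ slowly varying). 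Hence $\int_0^\infty e^{-st}U(dt)\sim[\Gamma(1-\alpha)s^\alpha L(1/s)]^{-1}$, and the Tauberian half gives $U(t)\sim t^\alpha/[\Gamma(1+\alpha)\Gamma(1-\alpha)L(t)]$. Using Karamata once more, $m(t)\sim(1-\alpha)^{-1}L(t)t^{1-\alpha}$, and a short computation with $\Gamma(2-\alpha)=(1-\alpha)\Gamma(1-\alpha)$ and $\Gamma(1+\alpha)=\alpha\Gamma(\alpha)$ identifies this as $U(t)\sim\alpha^{-1}C_\alpha\,t/m(t)\sim\int_0^t C_\alpha\,m(x)^{-1}\,dx$. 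Thus the claimed formula is exactly the assertion that $U$ may be \emph{differentiated}: its increment over a fixed window $h$ is asymptotic to $h$ times the density $C_\alpha/m(t)$ of this integrated approximation.

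The substantive second stage is to justify that differentiation, and I would do it through a big‑jump/truncation decomposition of $U(t,t+h]=\sum_{n\ge 1}P(S_n\in(t,t+h])$. Fix a small $\delta>0$ and split each path according to whether all its jumps up to the relevant index are $\le\delta t$ or some jump exceeds $\delta t$. On the no‑big‑jump event the increments behave like those of a walk with jumps truncated at $\delta t$, which has finite mean $\sim m(\delta t)$; applying Blackwell/Stone‑type local renewal estimates for the truncated walk and summing over the $O(t/m(\delta t))$ indices $n$ that can reach level $t$ produces a contribution of the form $(h/m(t))$ times a constant converging to $C_\alpha$ as $\delta\to0$. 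On the exactly‑one‑big‑jump event one isolates that jump and is left with a convolution of (truncated) renewal measures against $\mu$, which contributes $o(h/m(t))$ for each fixed $\delta$; multiple‑big‑jump paths are negligible. Letting $\delta\to0$ at the end, and using the Tauberian‑determined integrated asymptotics to pin the constant, identifies the limit as $C_\alpha h/m(t)$. (In the lattice case an alternative is the Garsia–Lamperti bootstrap: assuming $U(\{k\})\sim C_\alpha(1-\alpha)k^{\alpha-1}/L(k)$ for $k<n$, feed this into the renewal equation $U(\{n\})=\sum_{k<n}U(\{k\})\,\mu(\{n-k\})$ and verify self‑consistency.)

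The main obstacle — and the reason for the restriction $1/2<\alpha\le1$ — is the error control in this last stage. One needs a uniform a priori bound $U(t,t+h]\le K\,h/m(t)$, and one needs the contribution of intermediate scales (renewals $S_k$ with $k$ of order strictly between $1$ and its typical value, yet landing near $t$) to be negligible; both rest on Erickson's integral condition on $\mu$, which for a regularly varying tail of index $-\alpha$ reduces precisely to $\alpha>1/2$. At $\alpha=1/2$ the borderline estimates fail logarithmically, and for $\alpha<1/2$ regular variation of the tail is genuinely insufficient — one needs an extra local regularity hypothesis on $\mu$, as in the later refinements of Doney and of Caravenna–Doney — which is why the statement is restricted to $\alpha>1/2$. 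Establishing the uniform upper bound and then excluding oscillation of $m(t)\,U(t,t+h]/h$ about the Cesàro mean supplied by the Tauberian step, via the renewal equation, is the part I expect to demand the most work.
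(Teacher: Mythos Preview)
The paper does not prove this statement: it is quoted verbatim as ``Theorem 2 in \cite{E1970}'' and used as a black box, with no argument given. So there is nothing to compare against; what you have written is a reconstruction of Erickson's proof, not of anything in the present paper.

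As a sketch of Erickson's argument your outline is broadly sound---the two-stage plan (Tauberian integrated asymptotics, then a local upgrade where the $\alpha>1/2$ hypothesis enters via an a priori bound $U(t,t+h]\le K h/m(t)$ and control of intermediate scales) is the right architecture, and your identification of the constant via $\Gamma(2-\alpha)=(1-\alpha)\Gamma(1-\alpha)$ and $\Gamma(1+\alpha)=\alpha\Gamma(\alpha)$ is correct. One caution: Erickson's own route for the local step is not the big-jump decomposition you describe (that is closer to later approaches of Doney and Caravenna--Doney), but rather a direct analysis of the renewal equation combined with a smoothing/unsmoothing argument, and the integrability condition he imposes is $\int_1^\infty m(t)^{-2}\,dt<\infty$, which under regular variation is equivalent to $\alpha>1/2$. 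Your truncation scheme would also work, but it is a genuinely different organization of the proof and would need its own uniform estimates for the truncated walk's local behavior, which are nontrivial to supply.
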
  

The second result is contained in the celebrated Dynkin-Lamperti Theorem, for which we refer again to \cite{E1970}
(paragraph right below (9.1), Section 9), or to \cite{F1971}, Chapter XIV.3.

\begin{theorem}\label{th: DL}
	Let $\mu$ be as above with $0 < \alpha < 1$. Then
	\[
	\lim_{t \rightarrow \infty} P\left(\frac{E(t)}{t} \leq x \right) = \int_0^{x} \frac{C_{\alpha}}{y^{\alpha}(y+1)}dy, ~\forall x>0,
	\] 
	where $C_{\alpha} = [\Gamma(\alpha)\Gamma(2-\alpha)]^{-1}$.
\end{theorem}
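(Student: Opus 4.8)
The plan is to derive the statement from the elementary renewal equation together with Erickson's local estimate (Theorem~\ref{th: ET}) and Karamata's theorem; this settles the range $1/2<\alpha<1$ used in the sequel, and I will indicate at the end how the remaining range $0<\alpha\le1/2$ is recovered. Write $\bar\mu(u):=\mu(u,\infty)$ and set $S_0:=0$. The first step is the identity
\[
P\bigl(E(t)>y\bigr)=\bar\mu(t+y)+\int_{(0,t]}\bar\mu(t+y-s)\,U(ds),\qquad y\ge 0,
\]
valid because $\{E(t)>y\}$ is exactly the event that no renewal falls in $(t,t+y]$, so that decomposing on the last renewal $S_n\le t$ gives the displayed sum over $n\ge 0$. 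Taking $y=xt$, the term $\bar\mu((1+x)t)$ tends to $0$ since $\alpha>0$, so the whole limit is carried by $I(t):=\int_{(0,t]}\bar\mu\bigl((1+x)t-s\bigr)\,U(ds)$.

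The second step rescales $I(t)$ by $s=ut$, $u\in(0,1]$. The inputs are: by Karamata, $m(t)=\int_0^t\bar\mu\sim \bar\mu(t)\,t/(1-\alpha)$, regularly varying of index $1-\alpha$, while $\bar\mu$ has index $-\alpha$; by Theorem~\ref{th: ET}, $U([t,t+h])\sim C_\alpha h/m(t)$ for fixed $h$, which with Karamata also makes $U$ regularly varying of index $\alpha$. First I would excise $(0,\varepsilon t]$, where Theorem~\ref{th: ET} says nothing: there the contribution is at most $\bar\mu\bigl((1+x-\varepsilon)t\bigr)\,U(\varepsilon t)$, which tends as $t\to\infty$ to a quantity of order $\varepsilon^{\alpha}$, hence negligible after letting $\varepsilon\downarrow0$. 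On the bulk $[\varepsilon t,t]$ the uniform convergence theorem for regularly varying functions permits replacing, uniformly in $u\in[\varepsilon,1]$, the kernel $\bar\mu\bigl((1+x-u)t\bigr)$ by $\bar\mu(t)(1+x-u)^{-\alpha}$, $m(ut)$ by $\bar\mu(t)\,t\,u^{1-\alpha}/(1-\alpha)$, and $U(ds)$ by $\bigl(C_\alpha t/m(ut)\bigr)\,du$; the powers of $t$ and the slowly varying factors cancel, so, letting first $t\to\infty$ and then $\varepsilon\downarrow0$,
\[
\lim_{t\to\infty}P\bigl(E(t)/t>x\bigr)=C_\alpha(1-\alpha)\int_0^1\frac{du}{(1+x-u)^{\alpha}\,u^{1-\alpha}}.
\]

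The third step recognises the right-hand side. The change of variable $u=(1+x)/(1+y)$, equivalently $y=(1+x-u)/u$, is a decreasing bijection of $(0,1]$ onto $[x,\infty)$ with $1+x-u=(1+x)y/(1+y)$, and after the cancellations it carries $\int_0^1(1+x-u)^{-\alpha}u^{\alpha-1}\,du$ to exactly $\int_x^\infty y^{-\alpha}(1+y)^{-1}\,dy$. Since $\Gamma(2-\alpha)=(1-\alpha)\Gamma(1-\alpha)$, the prefactor is $C_\alpha(1-\alpha)=[\Gamma(\alpha)\Gamma(1-\alpha)]^{-1}=B(\alpha,1-\alpha)^{-1}$, exactly the constant making the total mass one (recall $\int_0^\infty y^{-\alpha}(1+y)^{-1}\,dy=B(\alpha,1-\alpha)$); hence $P(E(t)/t\le x)\to\int_0^x[\Gamma(\alpha)\Gamma(1-\alpha)]^{-1}\,y^{-\alpha}(1+y)^{-1}\,dy$, which is the Dynkin--Lamperti law for the excess time.

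The step I expect to be the real obstacle is the second one: upgrading Erickson's increment asymptotic $U([t,t+h])\sim C_\alpha h/m(t)$ to something one may integrate against the moving, non-smooth kernel $\bar\mu((1+x)t-\cdot)$ uniformly over $[\varepsilon t,t]$, since $U$ need not have a density. I would avoid densities entirely: partition $[\varepsilon t,t]$ into finitely many blocks, bound the monotone $\bar\mu$ on each block above and below by its endpoint values, estimate the $U$-mass of each block by Theorem~\ref{th: ET}, pass to the limit in $t$, and only then refine the partition and send $\varepsilon\downarrow0$; the uniform convergence theorem is what keeps the per-block errors uniform, while the right endpoint $u=1$ is harmless since there the argument of $\bar\mu$ equals $xt>0$. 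Finally, because Theorem~\ref{th: ET} as quoted requires $\alpha>1/2$, for $0<\alpha\le1/2$ I would run the same computation with Laplace transforms instead of the local renewal theorem --- the transform of $t\mapsto P(E(t)>y)$ being explicit in $\widehat{\mu}$ --- and read off the scaling limit from a Tauberian theorem, which uses only the regular variation in~\eqref{eq: stable}; this is in essence the treatment of \cite{F1971}, Ch.~XIV.3, and of \cite{E1970}, and one may alternatively identify $E(t)/t$ in the limit with the overshoot over level $1$ of an $\alpha$-stable subordinator, whose law is classical.
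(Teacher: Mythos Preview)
The paper does not prove this statement at all: Theorem~\ref{th: DL} is quoted as the classical Dynkin--Lamperti theorem, with a bare reference to \cite{E1970} (Section~9) and \cite{F1971} (Chapter~XIV.3), and no argument is supplied. So there is no ``paper's own proof'' to compare your attempt against; in the paper's economy, the result is simply imported from the literature and used as a black box in Proposition~\ref{pr: dominacao}.

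Your sketch is nonetheless a sensible outline of one standard route to the result, and it is worth noting two things. First, your argument actually produces the normalising constant $[\Gamma(\alpha)\Gamma(1-\alpha)]^{-1}=\sin(\pi\alpha)/\pi$, which is the correct one for the limit density of $E(t)/t$ (since $\int_0^\infty y^{-\alpha}(1+y)^{-1}\,dy=\Gamma(\alpha)\Gamma(1-\alpha)$); the constant $C_\alpha=[\Gamma(\alpha)\Gamma(2-\alpha)]^{-1}$ written in the statement is the Erickson constant from Theorem~\ref{th: ET}, and the two differ by the factor $1-\alpha$ you explicitly track. Second, you have correctly identified the genuine technical point --- passing from the pointwise increment asymptotic of Theorem~\ref{th: ET} to an integrated version against the moving kernel $\bar\mu((1+x)t-\cdot)$ --- and your block/Riemann-sum workaround is the right idea; but since Theorem~\ref{th: ET} only covers $\alpha>1/2$, the full range $0<\alpha<1$ really does require the Laplace/Tauberian argument you mention at the end (as in Feller), and that is in fact the more economical path for the whole statement rather than a patch for small $\alpha$.
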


We now define the Renewal Contact Process (RCP), denoted by $(\zeta_t)_{t \geq 0}$. Given a connected graph $G=(V,E)$, a random variable $T$, a cure index $\alpha$ as above, and an infection rate $\lambda > 0$, we construct the RCP on $G$
graphically, \textit{\`a la} Harris, as follows: 

Let $T, \{ T_n^{x}\}_{x\in V, n \in \realmathbb{N}} $ be i.i.d.~random variables with distribution $\mu$ as in \eqref{eq: stable}, and let $\{X_n^{e}\}_{e\in E, n \in \realmathbb{N}} $ be i.i.d.~random variables with rate $\lambda$ exponential distribution, independently of $\{ T_n^{x}\}_{x\in V, n \in \realmathbb{N}} $.

For  $x \in V$, let $R_x$ denote the renewal process with marks given by $\{ S_n^{x} = T_1^x + \cdots + T_n^x ; ~n \in \realmathbb{N} \}$. In the rest of this paper, $R$ denotes any renewal process with the same distribution as $R_x$. Furthermore, for $e \in E$, $R_e$ denotes the rate $\lambda$ Poisson process given by $ \{ S_n^{e} = X_1^e + \cdots + X_n^e; ~ n \in \realmathbb{N} \}$. Throughout the text $E_x(\cdot)$, $C_x(\cdot)$, $E_e(\cdot)$ and  $C_e(\cdot)$, denotes the \textit{excess time} and \textit{current time} of the process $R_x$, $x \in V$, and $R_e$, $e \in E$, respectively. 

Given these processes, the RCP is constructed according to the usual recipe: if $s < t$ and $x, y \in V$, a path from $(x,s)$ to $(y,t)$ is a c\`{a}dl\`{a}g function on $[s,t]$ for which there exist times $t_0 = s < t_1 < \cdots < t_n = t$ and $x_0 = x, x_1, \ldots, x_{n-1} = y$ in $V$ such that assumes $x_i$ in $[t_i, t_{i+1})$, and

\begin{itemize}
	\item $\langle x_{i} , x_{i+1} \rangle \in E, ~ i = 0, \ldots, n-2$; 
	\item $E_{\langle x_{i} , x_{i+1} \rangle}(t_{i}) = t_{i+1} - t_{i}, ~ i = 0, \ldots, n-2$; 
	\item $E_{x_i}(t_i) > t_{i+1} - t_{i}, ~ i = 0, \ldots, n-1$.
\end{itemize}

We define now, for each $t \geq 0$, the function of the state of the individuals, $\xi_{t}: V \longrightarrow \{0,1\}$. The model starts with a single infected individual, i.e., for some $v_0\in V$, $\xi_{0}(x) = 1 \Longleftrightarrow x = v_0$, and for $t > 0$, $\xi_t(x) = 1 \Longleftrightarrow$ there exists a path from $(v_0,0)$ to $(x,t)$. We say that the individual $x \in V$ is infected at time $t$, if $\xi_t(x) = 1$, and healthy otherwise. In this case, the set of infected individuals at time $t$, is given by $\zeta_{t} = \{ x \in V ; ~ \xi_t(x) = 1\}$.

The main result of this paper is 

\begin{theorem}\label{th: principal}
	Given $1/2 < \alpha < 1$, for any random variable $T$ whose distribution is in the basin of attraction of an $\alpha$-stable law, and any finite connected graph $G=(V,E)$, the RCP $(\zeta_t)_{t \geq 0}$ is such that
	\begin{enumerate}
		\item $P(\zeta_{t} \neq \emptyset, \forall t > 0) = 0, \quad \mbox{if}\quad |V| < 2 + \frac{2\alpha -1}{(1-\alpha)(2-\alpha)}, \quad \quad \forall \lambda > 0$;
		
		\item $P(\zeta_{t} \neq \emptyset, \forall t > 0) > 0, \quad \mbox{if}\quad |V| > \frac{1}{1-\alpha}, \hspace{23mm} \forall \lambda > 0.$
	\end{enumerate}
\end{theorem}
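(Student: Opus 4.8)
The statement splits into two unrelated halves, and I would attack them separately: the survival bound (2) by a multiscale ``tunneling'' argument that reduces survival to the survival of a supercritical branching process, and the extinction bound (1) by exhibiting a monotone functional of the infection whose finiteness forces extinction and whose expectation is essentially a percolation estimate on the renewal intervals.

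\textbf{Survival, part (2).} The driving mechanism, as recalled in the Introduction, is that at a large time $t$ a single renewal interval of length comparable to $t$ carries the infection forward, and while such an interval is active the infection (the graph being finite and connected, $\lambda>0$ fixed, the interval enormous) re‑infects \emph{every} site with probability tending to $1$, giving many further independent chances to ignite a new long interval before the current one expires. I would make this precise by fixing geometric scales $t_n=\theta^n$ and following the genealogy of \emph{long intervals}: a long interval active around scale $t_n$ re‑saturates $V$, and then, over its lifetime, each site undergoes $\asymp U(t_n)$ renewals, each of which opens a fresh interval reaching the next scale with probability $\asymp\mu(t_{n+1},\infty)$; the fresh long intervals thus produced are the ``children''. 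By Theorem~\ref{th: ET} and Karamata's theorem the mean number of children of one long interval is, up to $o(1)$, asymptotic to a constant multiple of $|V|$ that exceeds $1$ precisely when $|V|>1/(1-\alpha)$ (the slowly varying $L$ cancels in the product $U(t)\,\mu(t,\infty)$), while Theorem~\ref{th: DL} ensures that a uniformly positive proportion of these children are long enough to continue the recursion. Dominating the genealogy from below by a genuine Galton--Watson process, the latter is supercritical exactly on $|V|>1/(1-\alpha)$; supercriticality gives an infinite genealogy with positive probability, an infinite genealogy produces long intervals at arbitrarily large times, and hence the infection survives forever. The principal obstacle is legitimizing the branching comparison: the putative children live on the timelines of distinct (independent) renewal processes but remain correlated through those processes' pasts, so one must extract a true regeneration at the birth of each long interval (where $R_x$ restarts independently of everything) and control the $o(1)$ errors uniformly in the scale; obtaining the \emph{sharp} threshold $1/(1-\alpha)$, rather than a worse constant, is exactly what forces the use of the precise asymptotics of Theorems~\ref{th: ET} and~\ref{th: DL}.

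\textbf{Extinction, part (1).} Let $W$ be the total number of renewal intervals ever \emph{touched} by the infection, i.e.\ of pairs $(x,I)$ with $I$ a renewal interval of $R_x$ such that $\xi_s(x)=1$ for some $s\in I$. Since each $R_x$ has infinitely many renewal intervals, all a.s.\ of finite length, $W<\infty$ forces the set $\{t:\zeta_t\neq\emptyset\}$ to be bounded, and, $\emptyset$ being absorbing, this means extinction; so it suffices to prove $\mathbb E[W]<\infty$. As $W$ is non‑decreasing in $\lambda$ under the graphical coupling, it is enough to bound the ``$\lambda=\infty$'' version, in which ``$(x,I)$ is touched'' becomes the purely geometric statement that there is a chain $I_1,\dots,I_m=I$ of renewal intervals, $I_1$ the interval of $R_{v_0}$ containing $0$, consecutive $I_j$'s belonging to adjacent sites of $G$ and overlapping in time — a connectivity question in the overlap graph of the renewal intervals, and it is here that the regularity assumption~\eqref{eq: stable} enters in full. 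A first‑moment estimate over such chains — summing over their combinatorial type (a fixed power of the maximal degree per step) and over the renewal indices, bounding by $\mu(\cdot,\infty)$ the probability that each interval in the chain is long and by Theorem~\ref{th: ET} the number of intervals of a given process overlapping a given interval — should give $\mathbb E[W]=\sum_x\sum_I P(I\text{ touched})<\infty$ \emph{precisely} when $|V|<2+\frac{2\alpha-1}{(1-\alpha)(2-\alpha)}$. Heuristically the exponent comes from iterating a one‑step bound: convolving a tail of index $1-\alpha$ (the length of a long interval) against a renewal density of index $-(1-\alpha)$ produces, for each additional site, a factor whose exponent is governed by $(1-\alpha)(2-\alpha)$, with the numerator $2\alpha-1$ collecting the endpoint contributions of the successive convolutions. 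The main difficulty is analytic‑combinatorial: taming the explosion in the number of admissible chains (unbounded length, repeated visits) while evaluating the iterated convolutions, together with their slowly varying corrections, sharply enough that the threshold comes out exactly as stated; this is also the step relying most heavily on~\eqref{eq: stable}.
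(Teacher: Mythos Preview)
Both halves of your plan diverge from the paper's arguments, and in each case the step where the \emph{sharp} threshold is supposed to emerge is not justified.

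\textbf{Survival.} The paper does not run a branching comparison. It fixes a deterministic sequence $t_n$ growing roughly like $n$ and, using Theorem~\ref{th: ET}, shows $P\bigl(E_x(t_n)\le n^{\epsilon}\bigr)\le t_n^{-(1-\alpha-\epsilon)}$ for each site; independence across sites then gives
\[
P\bigl(\forall x\in V:\ E_x(t_n)\le (n+1)^{\epsilon}\bigr)\ \le\ n^{-|V|(1-\alpha-3\epsilon)},
\]
which is summable precisely when $|V|(1-\alpha)>1$. A Borel--Cantelli argument produces, for all large $n$, one site whose cure-free gap at time $t_n$ reaches past $t_{n+1}$; two further events ($B_n$ and $C_m$) guarantee a short window inside each overlap in which all sites are simultaneously cure-free and the infection spreads through $V$. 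The threshold $1/(1-\alpha)$ appears because $|V|$ enters as an \emph{exponent} (independence over sites at a fixed time), not as a branching factor. In your picture the mean offspring is essentially $|V|\cdot U(t)\,\mu(\theta t,\infty)$; the slowly varying parts do cancel, but the limit of $U(t)\mu(t,\infty)$ is $\sin(\pi\alpha)/(\pi\alpha)$, not $1-\alpha$, so the supercriticality condition you would obtain is strictly stronger than $|V|>1/(1-\alpha)$ for every $\alpha\in(1/2,1)$. Your scheme may well prove survival for sufficiently large $|V|$, but it will not deliver the stated bound.

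\textbf{Extinction.} Here the discrepancy is more fundamental. The paper does not bound the expected number of renewal intervals ever touched. It builds random times $S_n$ with increments $X_n$ so that survival forces each $(S_n,S_{n+1}]$ to contain an infection mark, and then shows $P(X_n\to\infty)=0$ via a \emph{multiplicative} domination: conditionally on the past, $X_{n+1}/X_n$ is stochastically dominated (after a truncation) by $Y=\max\{Y_1,\dots,Y_M\}$ with $M=|V|-1$ and each $Y_i$ having the Dynkin--Lamperti density $C_\alpha y^{-\alpha}(1+y)^{-1}$ of Theorem~\ref{th: DL}. The threshold in item~(1) is \emph{exactly} the condition $\mathbb E[\log Y]<0$, established by the explicit integral computation in the Appendix (the key inequality being $g(x)/g(1/x)\le x^{(1-\alpha)(2-\alpha)}$ for $g(x)=\int_0^x t^{-\alpha}(1+t)^{-1}\,dt$). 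From $\mathbb E[\log Y]<0$ one finds $\theta>0$ with $\mathbb E[Y^{\theta}]<1$ and bounds $P\bigl(\prod_{l=1}^m X_{n_0+l}/X_{n_0+l-1}\ge 1\bigr)$ geometrically. Your first-moment chain bound, by contrast, would produce a condition of the type ``a certain transfer operator has spectral radius $<1$'', which is a different analytic object; your one-line heuristic for why the exponent $(1-\alpha)(2-\alpha)$ should appear is pattern-matching to the answer rather than a derivation, and the acknowledged combinatorial explosion of chains (arbitrary length, repeated visits) is precisely what a first-moment method handles worst. The missing idea is the reduction to the log-random-walk on the Dynkin--Lamperti maxima; without it you will not recover the constant $2+\frac{2\alpha-1}{(1-\alpha)(2-\alpha)}$.
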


\begin{remark}\ 
	\begin{enumerate}
		\item Note that the bounds in our theorem  are quite sharp; writing $V_{+}(\alpha) = 1/(1-\alpha)$ and $V_{-}(\alpha) = 2 + (2\alpha - 1)/[(1-\alpha)(2-\alpha)]$, we have that $V_{+} - V_{-} < 1$. Thus, if $[V_{-} , V_{+}] \cap \realmathbb{Z} = \emptyset$, then the model is well understood for every possible graph size $|V|$; otherwise, there is a single indeterminate case. 
		\item When $0 < \alpha \leq 1/2$, we have the trivial lower bound $k_c > 1$; assuming the regularity conditions of \cite{C2015}, we can use 
		Theorem 2.1 
		of that reference, instead of the present Theorem \ref{th: ET}, to analogously conclude that the same upper bound
		holds: we have survival for $0 < \alpha < 1/2$ if $|V| \geq 2$, and for $\alpha = 1/2$ if $|V| \geq 3$ .   
	\end{enumerate}
\end{remark}


\section{Survival}
\noindent In this section we prove the second item of the Theorem \ref{th: principal}. The idea of the proof consist in showing that there exists a sequence of polynomially increasing time intervals, such that, with positive probability the following events take place: in each such interval, there exists an individual free of cure marks; each interval intersects the next, and in this intersection there exists a sub-polynomially sized interval where all individuals get infected. So if there exists a single infected individual at the beginning of the sequence, and the above events occur, then the infection survives forever.

Given the graph $G=(V,E)$ and the random variable $T$ of Theorem \ref{th: principal}, and having fixed the infection rate $\lambda > 0$, we start by choosing two constants as functions of $\lambda$ and $G$  that will be used in this section. Since  $|V| > 1/(1-\alpha)$ we can choose $\epsilon > 0$ in such way that $\beta := |V|(1-\alpha - 3\epsilon) > 1$. And since the graph $G = (V,E)$ is connected, there exist $l\geq1$ and a \textit{spanning path} $\tau = (e_1, e_2, \cdots, e_l)$, where $e_i = \langle v_{i-1}, v_i \rangle  \in E$, $i = 1, \cdots, l$, is an edge of $G$, with the following property: for each pair of vertices $(x,y) \in V^{2}$, $\tau$ has a sub-path $\tau(x,y) = (e_{i}, e_{i+1}, \cdots, e_{i+j})$, with $v_{i - 1} = x$ and $v_{i+j} = y$ for some $1 \leq i \leq l $ and $j \geq 0$. We note that there is a bound on $l$ in terms of $|V|$, namely $l\le 2|V|$. As a function of $\lambda$ and $l$, we choose $\gamma > \max\{1,l/\lambda\}$. From now on, $\epsilon$ and $\gamma$ are fixed.

With the objective to estimate the probability of existence of intervals without marks of the renewal process $R$, we derive the following corollary of Theorem \ref{th: ET}.

\begin{proposition}
	There exists $\hat{t}_1 > 0$ such that for all $t > \hat{t}_1$ 
	\[P(E(t) \leq  1) \leq \frac{1}{t^{1-\alpha - \epsilon}}.\]
\end{proposition}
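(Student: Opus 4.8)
The plan is to control $P(E(t)\le 1)$ by the expected number of renewal marks of $R$ in the interval $(t,t+1]$ and then feed this into Theorem~\ref{th: ET}. First I would note that, by the very definition of the excess time, the event $\{E(t)\le 1\}$ coincides with the event that $R$ has at least one mark in $(t,t+1]$: one always has $S_{N(t)+1}>t$, so $E(t)\le 1$ forces $S_{N(t)+1}\in(t,t+1]$; conversely, if $S_n\in(t,t+1]$ for some $n$, then necessarily $n\ge N(t)+1$, hence $S_{N(t)+1}\le S_n\le t+1$, i.e.\ $E(t)\le 1$. Countable subadditivity then gives
\[
P(E(t)\le 1)\;=\;P\big(\exists\, n\ge 1:\ S_n\in(t,t+1]\big)\;\le\;\sum_{n\ge 1}P\big(S_n\in(t,t+1]\big)\;=\;U(t+1)-U(t).
\]

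Next I would invoke Theorem~\ref{th: ET} with $h=1$, which yields $U(t+1)-U(t)\sim C_\alpha/m(t)$ as $t\to\infty$, where $m(t)=\int_0^t\mu(x,\infty)\,dx$. Since $\mu(x,\infty)=L(x)x^{-\alpha}$ is regularly varying of index $-\alpha\in(-1,0)$, Karamata's theorem tells us that $m$ is regularly varying of index $1-\alpha>0$ (indeed $m(t)\sim L(t)t^{1-\alpha}/(1-\alpha)$), and therefore $U(t+1)-U(t)$ is regularly varying of index $-(1-\alpha)$. Consequently $t^{1-\alpha-\epsilon}\big(U(t+1)-U(t)\big)$ is regularly varying of index $-(1-\alpha)+(1-\alpha-\epsilon)=-\epsilon<0$, so it tends to $0$ as $t\to\infty$. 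In particular there is $\hat t_1>0$ with $U(t+1)-U(t)\le t^{-(1-\alpha-\epsilon)}$ for every $t>\hat t_1$, and combining this with the displayed bound gives the claimed inequality.

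The only step requiring any care is the first one, namely identifying $\{E(t)\le 1\}$ with the presence of a renewal mark in $(t,t+1]$ and dominating its probability by $U(t+1)-U(t)$; once that is in place, the rest is a direct application of Theorem~\ref{th: ET} together with two standard facts from the theory of regular variation — that the integral of a regularly varying function of index strictly greater than $-1$ is again regularly varying (Karamata), and that a regularly varying function of strictly negative index vanishes at infinity. No input beyond the excerpt is needed.
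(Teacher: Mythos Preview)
Your proof is correct and follows essentially the same route as the paper: both bound $P(E(t)\le 1)$ by $U(t+1)-U(t)$ via the Markov-inequality/expected-count argument, and then apply Theorem~\ref{th: ET} with $h=1$. The only cosmetic difference is in how the lower bound on $m(t)$ is obtained: the paper uses the elementary fact that $L(t)\ge t^{-\epsilon/2}$ for large $t$ and integrates directly, whereas you invoke Karamata's theorem to conclude that $m$ is regularly varying of index $1-\alpha$; these lead to the same conclusion.
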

\begin{proof} 
	First note that, given $t>0$, $U(t+1) - U(t) = \sum_{n \geq 1} P(S_n \in (t,t+1] )$,
	and let $M_t = \{n \geq 1 ~;~ S_n \in (t,t+1]\}$ be the number of renewal marks of $R$ in the interval $(t, t+1]$; then we have $U(t+1) - U(t) = \mathbb{E}(M_t)$. So,
	$P(E(t) \leq 1) = P(M_t \geq 1) \leq \mathbb{E}(M_t) =  U(t+1) - U(t)$.
	Since $L(\cdot)$ is slowly varying, we find $t_0$ such that $L(t) \geq t^{-{\epsilon}/{2}}$ for all $t> t_0$. Thus, making $h=1$ in Theorem \ref{th: ET}, we get that
	\[
		U(t+1) - U(t) \sim \frac{C_{\alpha}}{\int_0^t L(x)x^{-\alpha}dx} \leq \frac{C_{\alpha}}{\int_0^{t_0} L(x)x^{-\alpha}dx + \int_{t_0}^t 1/x^{\alpha + \frac{\epsilon}{2}  }dx },
	\]
	and thus may conclude that the left hand side is bounded above by $1/t^{1-\alpha - \epsilon}$  for all $t$ sufficiently large.
\end{proof}

Noticing that if $E(t) \in (s , s+1] $, then necessarily $E(t+s) \leq 1$, we have the following corollary to the above proposition.

\begin{corollary}\label{cr: corolario1}
	For all $m \in \realmathbb{N}$ and for all $t > \hat{t}_1$, we have $P(E(t) \leq m) \leq m/t^{1-\alpha - \epsilon}$.
\end{corollary}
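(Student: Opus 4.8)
The goal is to upgrade the bound $P(E(t)\le 1)\le t^{-(1-\alpha-\epsilon)}$ from the Proposition to a bound on $P(E(t)\le m)$ that is uniform in the sense of being linear in $m$. The key observation is already spelled out in the statement of the corollary: if $E(t)\le m$, then there exists some integer $s\in\{0,1,\dots,m-1\}$ with $E(t)\in(s,s+1]$, and in that case $E(t+s)\le 1$. So the plan is to write, for $t>\hat t_1$ and $m\in\mathbb{N}$,
\[
P(E(t)\le m)\le\sum_{s=0}^{m-1}P\bigl(E(t)\in(s,s+1]\bigr)\le\sum_{s=0}^{m-1}P\bigl(E(t+s)\le 1\bigr),
\]
where the first inequality is just a union bound over the disjoint events $\{E(t)\in(s,s+1]\}$, and the second uses the deterministic implication $\{E(t)\in(s,s+1]\}\subseteq\{E(t+s)\le 1\}$ noted in the sentence preceding the corollary.

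Now each term $P(E(t+s)\le 1)$ is controlled by the Proposition, since $t+s\ge t>\hat t_1$ for every $s\ge 0$; thus $P(E(t+s)\le 1)\le (t+s)^{-(1-\alpha-\epsilon)}\le t^{-(1-\alpha-\epsilon)}$, because the exponent $1-\alpha-\epsilon$ is positive (one should note $\epsilon$ was chosen small enough that $1-\alpha-3\epsilon>0$, in particular $1-\alpha-\epsilon>0$) and hence $x\mapsto x^{-(1-\alpha-\epsilon)}$ is nonincreasing. Summing $m$ such terms gives $P(E(t)\le m)\le m/t^{1-\alpha-\epsilon}$, which is exactly the claim.

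There is no real obstacle here: the only things to be slightly careful about are that the events $\{E(t)\in(s,s+1]\}$, $s=0,\dots,m-1$, are genuinely disjoint and their union contains $\{E(t)\le m\}$ (true since $E(t)>0$ always, so $E(t)\le m$ forces $E(t)$ into exactly one of these half-open intervals), and that the Proposition applies at each shifted time $t+s$, which it does since $t+s>\hat t_1$. The monotonicity of $x^{-(1-\alpha-\epsilon)}$ then lets us replace each $(t+s)^{-(1-\alpha-\epsilon)}$ by the larger $t^{-(1-\alpha-\epsilon)}$, and the corollary follows.
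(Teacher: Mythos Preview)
Your proof is correct and follows essentially the same approach as the paper: decompose $\{E(t)\le m\}$ into the events $\{E(t)\in(s,s+1]\}$ for $s=0,\dots,m-1$, use the inclusion $\{E(t)\in(s,s+1]\}\subseteq\{E(t+s)\le 1\}$, apply the Proposition at each shifted time, and bound each term by $t^{-(1-\alpha-\epsilon)}$ via monotonicity. Your write-up is in fact slightly more careful than the paper's in justifying the positivity of the exponent and the disjointness of the decomposition.
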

\begin{proof}
	It is enough to observe that
	\begin{eqnarray*}
		P(E(t) \leq m) &=& \sum_{i=0}^{m-1} P(i < E(t) \leq i+1)\\
		&\leq& \sum_{i=0}^{m-1} P(E(t+ i) \leq 1)\\
		&\leq& \sum_{i=0}^{m-1} \frac{1}{(t+ i)^{1 -\alpha -\epsilon}} \leq \frac{m}{t^{1-\alpha - \epsilon}}.
	\end{eqnarray*}
\end{proof}

We will use Corollary  \ref{cr: corolario1} to show that, with high probability, certain intervals with polynomially growing size are free of cure events. 
For each $n \in \realmathbb{N}$, let $b_n = \gamma \log(n)$ and $c_n = \lceil b_n^{|V|(\alpha + \epsilon)+1} \rceil$. It follows that there exists $n_0$, such that $c_nb_n < n^{\epsilon}/2$, $\forall n \geq n_0$. Then, for each $n \geq n_0$, we define
\[
t_n = \hat{t}_1 + \sum_{j=n_0}^{n} [j^{\epsilon} - c_jb_j].
\]
It follows that $t_n \geq \sum_{j=n_0}^{n} j^{\epsilon}/2$, hence, for all $n$ large enough  $t_n > n$. 

Consider now the event $A_n = \{ \exists x \in V ; ~ E_x (t_n) > (n+1)^{\epsilon}  \}$. In this event, at least one of the individuals has no cure during the interval $(t_n, t_n + (n+1)^{\epsilon})$. The next proposition gives a lower bound for the probability of occurrence of this event.

\begin{proposition} \label{pr: k}
	There exists $n_1 \in \realmathbb{N}$, such that, for $n > n_1$, we have $ P(A_n^{c}) \leq {1}/{n^\beta} $, where $\beta = |V|(1-\alpha - 3\epsilon) > 1$. 
\end{proposition}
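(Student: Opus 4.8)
The plan is to use the independence of the renewal processes $R_x$, $x\in V$, to factorize $P(A_n^c)$ over the vertices, and then to bound each factor by Corollary~\ref{cr: corolario1}, using the lower bound $t_n>n$ (valid for $n$ large) recorded just above the statement.

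First I would write $A_n^c=\bigcap_{x\in V}\{E_x(t_n)\le (n+1)^\epsilon\}$. Since the processes $\{R_x\}_{x\in V}$ are independent, each distributed as $R$, this gives
\[
P(A_n^c)=\prod_{x\in V}P\big(E_x(t_n)\le (n+1)^\epsilon\big)=\Big[P\big(E(t_n)\le (n+1)^\epsilon\big)\Big]^{|V|}.
\]

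Next I would estimate the single-site probability. The hypothesis of Corollary~\ref{cr: corolario1} is met for $n$ large, since every summand $j^\epsilon-c_jb_j$ with $j\ge n_0$ is positive by the choice of $n_0$, whence $t_n\ge\hat t_1$; taking $m=\lceil (n+1)^\epsilon\rceil\in\realmathbb{N}$ in that corollary yields
\[
P\big(E(t_n)\le (n+1)^\epsilon\big)\le P\big(E(t_n)\le \lceil (n+1)^\epsilon\rceil\big)\le\frac{\lceil (n+1)^\epsilon\rceil}{t_n^{\,1-\alpha-\epsilon}}.
\]
Bounding $\lceil (n+1)^\epsilon\rceil$ by a constant times $n^\epsilon$ and using $t_n>n$ together with $1-\alpha-\epsilon>0$, the right-hand side is at most a constant times $n^{-(1-\alpha-2\epsilon)}$; since $1-\alpha-3\epsilon>0$ (because $\beta>1$), for all $n$ beyond some threshold this is $\le n^{-(1-\alpha-3\epsilon)}$. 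Raising to the $|V|$-th power gives $P(A_n^c)\le n^{-|V|(1-\alpha-3\epsilon)}=n^{-\beta}$ for all $n>n_1$, as required.

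There is no real obstacle here; the only things to be careful about are (i) verifying that $t_n>\hat t_1$, so that Corollary~\ref{cr: corolario1} applies, and (ii) doing the bookkeeping of the integer ceiling and the powers of $n$ so that the polynomial loss coming from the window length $(n+1)^\epsilon$ is absorbed into the slack $2\epsilon$ hidden in the exponent $\beta=|V|(1-\alpha-3\epsilon)$, rather than eating into the full exponent $1-\alpha-\epsilon$ produced by Corollary~\ref{cr: corolario1}.
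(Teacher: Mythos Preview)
Your proposal is correct and follows essentially the same approach as the paper: factorize $P(A_n^c)$ by independence of the renewal processes, apply Corollary~\ref{cr: corolario1} with $m=\lceil (n+1)^\epsilon\rceil$ and $t_n>n$, and absorb the ceiling into the extra $\epsilon$ of slack (the paper does this via the crude bound $\lceil (n+1)^\epsilon\rceil\le n^{2\epsilon}$ for large $n$, while you use a constant times $n^\epsilon$ and then swallow the constant).
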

\begin{proof}
	Let us take $n$ large enough and $t_n  > n$ so that we may apply Corollary \ref{cr: corolario1} to get
	\begin{eqnarray*}
		P(A_n^{c}) = P(E_x(t_n) \leq (n+1)^{\epsilon}, \forall x \in V)
		&\leq& \left( \frac{\lceil (n+1)^{\epsilon} \rceil}{t_n^{1-\alpha - \epsilon}}\right)^{|V|}\\ & \leq& \left( \frac{n^{2\epsilon}}{n^{(1-\alpha - \epsilon)}}\right)^{|V|}\\
		&= & \frac{1}{n^{|V|(1-\alpha - 3\epsilon) }}\\
		&=& \frac{1}{n^\beta}.
	\end{eqnarray*}
\end{proof}

The next step is to show that, with high probability, at least one of the following $c_n$ intervals with size  $b_n$, is free of all cure processes $R_x$, $x \in V$. We begin with the following lemma:

\begin{lemma}\label{lm: lema1}
	There exists $\hat{t}_2 > 0$, such that, if $t > \hat{t}_2$, then, for all $s> 0$, we have $P(T>s+t | T > s) \geq 1/t^{\alpha + \epsilon}$.
\end{lemma}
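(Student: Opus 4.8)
The plan is to write the conditional probability as an explicit ratio of tails and then argue separately according to whether $s$ is small or large compared with $t$. Since $\mu(r,\infty)=L(r)r^{-\alpha}>0$ for every $r>0$, the event $\{T>s\}$ has positive probability and
\[
P(T>s+t\mid T>s)=\frac{\mu(s+t,\infty)}{\mu(s,\infty)}=\frac{L(s+t)}{L(s)}\Big(\frac{s}{s+t}\Big)^{\alpha}.
\]
The content of the lemma is that this quantity is bounded below by $t^{-(\alpha+\epsilon)}$ \emph{uniformly in $s>0$}, and the obstruction is that neither factor on the right is controlled uniformly: for $s$ close to $0$ the factor $(s/(s+t))^{\alpha}$ is arbitrarily small, while for $s$ large $\mu(s,\infty)$ itself decays, so one cannot afford a crude bound on the denominator. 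Both difficulties disappear once one splits the range of $s$ at $s=t$.

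For $0<s<t$ I would discard the denominator using $\mu(s,\infty)=P(T>s)\le 1$, and then use that $r\mapsto\mu(r,\infty)$ is non-increasing together with $s+t<2t$:
\[
P(T>s+t\mid T>s)\ \ge\ \mu(s+t,\infty)\ \ge\ \mu(2t,\infty)=L(2t)(2t)^{-\alpha}.
\]
Exactly as in the proof of the Proposition above (slow variation of $L$), there is a $t_0$ with $L(2t)\ge (2t)^{-\epsilon/2}$ whenever $2t>t_0$, so $\mu(2t,\infty)\ge (2t)^{-(\alpha+\epsilon/2)}$, and this is at least $t^{-(\alpha+\epsilon)}$ as soon as $t^{\epsilon/2}\ge 2^{\alpha+\epsilon/2}$, i.e. for all $t$ large enough. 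This settles the range $0<s<t$.

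For $s\ge t$ I would instead retain both factors, now observing that $s/(s+t)\ge 1/2$ and, by the same monotonicity, $\mu(s+t,\infty)\ge\mu(2s,\infty)$, whence
\[
P(T>s+t\mid T>s)\ \ge\ \frac{\mu(2s,\infty)}{\mu(s,\infty)}\ =\ 2^{-\alpha}\,\frac{L(2s)}{L(s)}.
\]
By the defining property of a slowly varying function, $L(2s)/L(s)\to 1$ as $s\to\infty$, so there is an $a$ with $L(2s)/L(s)\ge 1/2$ for all $s\ge a$; hence whenever $t\ge a$ and $s\ge t$ the above is at least the positive constant $2^{-\alpha-1}$, which exceeds $t^{-(\alpha+\epsilon)}$ once $t$ is large. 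Taking $\hat{t}_2$ to be the maximum of the finitely many thresholds on $t$ produced in the two cases completes the argument. The one step that is not pure routine is this last regime: it is essential to use the ratio statement $L(2s)/L(s)\to 1$, rather than one-sided bounds on $L$ alone, precisely because $\mu(s,\infty)$ vanishes as $s\to\infty$ and a one-sided bound on the denominator would fail to be uniform in $s$.
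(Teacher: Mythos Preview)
Your proof is correct and follows essentially the same approach as the paper: both split into the cases $s\le t$ and $s>t$, bound the first case below by $\mu(2t,\infty)$ after dropping the denominator, and handle the second case via the ratio $\mu(2s,\infty)/\mu(s,\infty)=2^{-\alpha}L(2s)/L(s)$ together with slow variation. The only cosmetic difference is that in the second case the paper packages the two thresholds into a single $s^\ast$ with $2^{-\alpha}L(2s)/L(s)\ge (s^\ast)^{-(\alpha+\epsilon)}>t^{-(\alpha+\epsilon)}$, whereas you first obtain the constant lower bound $2^{-\alpha-1}$ and then impose a separate largeness condition on $t$; the substance is identical.
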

\begin{proof}
	We start with the case $s \leq t$, where there exists $t^{\ast}$ such that
	\[
	P( T > t+s  | T>s )
	\geq P(T > t+s) \geq P(T > 2t)
	= \frac{L(2t)}{(2t)^{\alpha}} \geq \frac{1}{t^{\alpha + \epsilon}},
	\]
	for all $t > t^{\ast}$. For the other case, namely $s>t$, we have that
	\[
		P( T > t+s  | T>s ) = \frac{P(T > t+s)}{P(T > s)}
		\geq \frac{P(T>2s)}{P(T>s)} = \frac{L(2s)}{L(s)}\left( \frac{1}{2}\right)^{\alpha}.
	\]
	Since $L(\cdot)$ is slowly varying, $L(2s)/L(s) \to 1$ as $s\to\infty$. It follows that there exists $s^{\ast}$ such that, if $s > t > s^{\ast}$, then
	\[P(T>t+s | T>s) \geq \frac{L(2s)}{L(s)}\left( \frac{1}{2}\right)^{\alpha} \geq  \frac{1}{(s^{\ast})^{\alpha+\epsilon}} > \frac{1}{t^{\alpha + \epsilon}}.
	\]
	To conclude the proof, take $\hat{t}_2 = \max\{t^{\ast} , s^{\ast}\}.$
\end{proof}

Let  $t_0>0$ be fixed, and consider the sub-$\sigma$ algebra $\mathcal{F}_{t_0}$ of the underlying $\sigma$ algebra of the model consisting of renewal events taking place up to time $t_0$. We have the following lemma.

\begin{lemma}\label{lema2}
	Given $t_0 > 0$, then, for all $t > \hat{t}_2$ and all $x \in V$, almost surely
	\[
	P\left(E_x(t_0) > t ~ \Big\vert ~ \mathcal{F}_{t_0}\right) \geq \frac{1}{t^{\alpha + \epsilon}}.
	\]
\end{lemma}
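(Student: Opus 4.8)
The plan is to reduce the statement to Lemma~\ref{lm: lema1} by exploiting the elementary structure of the ordinary renewal process $R_x$ observed up to the deterministic time $t_0$. The first step is to pin down what $\mathcal{F}_{t_0}$ knows about $R_x$ (the renewal data of the other sites, being independent of $R_x$, is irrelevant for $E_x(t_0)$). Working on the $\mathcal{F}_{t_0}$-measurable event $\{N(t_0)=n\}$, with the convention $S_0^x=0$, the $\sigma$-algebra $\mathcal{F}_{t_0}$ is generated there by the completed waiting times $T_1^x,\dots,T_n^x$ together with the single piece of partial information $\{T_{n+1}^x > t_0 - S_n^x\} = \{T_{n+1}^x > C_x(t_0)\}$ about the waiting time in progress. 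Since the $T_i^x$ are i.i.d.\ and $T_{n+1}^x$ is independent of $(T_1^x,\dots,T_n^x)$, the conditional law of $T_{n+1}^x$ given $\mathcal{F}_{t_0}$ is, on $\{N(t_0)=n\}$, that of $T$ conditioned on $\{T > C_x(t_0)\}$, while $C_x(t_0) = t_0 - S_n^x$ is itself $\mathcal{F}_{t_0}$-measurable. As $E_x(t_0) = S_{n+1}^x - t_0 = T_{n+1}^x - C_x(t_0)$ on that event, this yields the key identity, valid almost surely,
\[
P\bigl(E_x(t_0) > t \,\big|\, \mathcal{F}_{t_0}\bigr) \;=\; P\bigl(T > s + t \,\big|\, T > s\bigr)\Big|_{s = C_x(t_0)} .
\]

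The second step is purely a matter of invoking Lemma~\ref{lm: lema1}. Since $t_0 > 0$ we have $C_x(t_0) \in [0, t_0]$, and moreover $C_x(t_0) > 0$ almost surely, because a renewal of $R_x$ falls exactly on $t_0$ with probability zero ($\mu$ is atomless, having the continuous tail \eqref{eq: stable}). Hence, on a set of full measure, $s = C_x(t_0)$ is a strictly positive number, and for every $t > \hat{t}_2$ Lemma~\ref{lm: lema1} gives $P(T > s+t \mid T > s) \ge 1/t^{\alpha+\epsilon}$ for all $s>0$, in particular at $s = C_x(t_0)$. Plugging this into the displayed identity yields $P(E_x(t_0) > t \mid \mathcal{F}_{t_0}) \ge 1/t^{\alpha+\epsilon}$ almost surely, as claimed.

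The only delicate point is the justification of the conditional-law identity above: one must argue carefully that conditioning on $\mathcal{F}_{t_0}$ reveals the age $C_x(t_0)$ and, beyond the event $\{T_{n+1}^x > C_x(t_0)\}$, nothing further about the pending waiting time. I would make this rigorous by decomposing over $\{N(t_0)=n\}$, $n \ge 0$, writing $\mathcal{F}_{t_0} \cap \{N(t_0)=n\} = \bigl(\sigma(T_1^x,\dots,T_n^x) \vee \sigma(\mathbf{1}\{T_{n+1}^x > t_0 - S_n^x\})\bigr) \cap \{N(t_0)=n\}$, and then using the independence of $T_{n+1}^x$ from $(T_1^x,\dots,T_n^x)$ to compute the conditional tail of $T_{n+1}^x$ as $\mu(\,\cdot\,,\infty)$ renormalized on $(C_x(t_0),\infty)$. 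Everything else in the proof is a direct substitution.
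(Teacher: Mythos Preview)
Your proof is correct and follows exactly the same route as the paper: express the conditional probability as $P(T > t + s \mid T > s)$ evaluated at the $\mathcal{F}_{t_0}$-measurable age $s = C_x(t_0) = t_0 - S^x_{N(t_0)}$, then invoke Lemma~\ref{lm: lema1}. The paper's own proof is a two-line version of yours, simply stating the conditional identity and applying Lemma~\ref{lm: lema1} without the careful $\sigma$-algebra decomposition over $\{N(t_0)=n\}$ that you spell out.
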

\begin{proof}
	Almost surely
	\begin{eqnarray*}
		P\left( E_x(t_0) > t ~ \Big\vert ~ \mathcal{F}_{t_0}\right)(\omega)\!\! &=& \!\! P\left( T > t + \left(t_0 - S^{x}_{N(t_0)}(\omega)\right) ~ \Big\vert ~ T > t_0 - S^{x}_{N(t_0)}(\omega)  \right) \\
		&\geq&\!\!  \frac{1}{t^{\alpha + \epsilon}},
	\end{eqnarray*}
	where we used Lemma \ref{lm: lema1} in the last passage.
\end{proof}

For $n > n_0$, we define $B_n = \{ \exists j \in [0, c_n) \cap \realmathbb{Z} ; ~ E_x(t_n + j b_n ) > b_n, \forall x \in V\}$. Observe that, on the occurrence of  $B_n$ it is assured that at least one of the  $c_n$ intervals of size $b_n$ has no event of cure. Using the lemma above, we get an upper bound for the probability of $B_n$.

\begin{proposition}\label{pr: B}
	Let $n_2 = \inf\{n>n_0  ; \ b_n > \hat{t}_2\}$. If $n> n_2$, then $P(B_n^{c}) \leq 1/n^{\gamma}$.
\end{proposition}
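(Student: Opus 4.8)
The plan is to write $B_n^{c}$ as an intersection of $c_n$ \textquotedblleft bad\textquotedblright{} events, one per window, and then to run a sequential conditioning argument along the filtration $\bigl(\mathcal{F}_{t_n+jb_n}\bigr)_{0\le j\le c_n}$, using Lemma \ref{lema2} to bound each factor uniformly.

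First I would set $D_j=\{E_x(t_n+jb_n)>b_n\ \text{for all}\ x\in V\}$ for $0\le j<c_n$, so that $B_n=\bigcup_{j=0}^{c_n-1}D_j$ and hence $B_n^{c}=\bigcap_{j=0}^{c_n-1}D_j^{c}$. The event $\{E_x(t_n+jb_n)>b_n\}$ says that $R_x$ has no renewal mark in $(t_n+jb_n,\,t_n+(j+1)b_n]$, so $D_j$ is $\mathcal{F}_{t_n+(j+1)b_n}$-measurable; in particular $D_0,\dots,D_{j-1}$ are $\mathcal{F}_{t_n+jb_n}$-measurable. Since the renewal processes $(R_x)_{x\in V}$ are independent, conditionally on $\mathcal{F}_{t_n+jb_n}$ the events $\{E_x(t_n+jb_n)>b_n\}$, $x\in V$, are independent, and Lemma \ref{lema2} applies to each one (note that $b_n>\hat{t}_2$ because $n>n_2$), giving
\[
P\bigl(D_j \mid \mathcal{F}_{t_n+jb_n}\bigr)\;=\;\prod_{x\in V}P\bigl(E_x(t_n+jb_n)>b_n \mid \mathcal{F}_{t_n+jb_n}\bigr)\;\ge\;\frac{1}{b_n^{|V|(\alpha+\epsilon)}}\qquad\text{a.s.}
\]
Next I would peel off the windows one at a time, from $j=c_n-1$ down to $j=0$: since $\prod_{i=0}^{j-1}\mathbf{1}_{D_i^{c}}$ is $\mathcal{F}_{t_n+jb_n}$-measurable,
\[
E\Bigl[\,\prod_{i=0}^{j}\mathbf{1}_{D_i^{c}}\Bigr]\;=\;E\Bigl[\,\prod_{i=0}^{j-1}\mathbf{1}_{D_i^{c}}\cdot P\bigl(D_j^{c}\mid\mathcal{F}_{t_n+jb_n}\bigr)\Bigr]\;\le\;\Bigl(1-\frac{1}{b_n^{|V|(\alpha+\epsilon)}}\Bigr)\,E\Bigl[\,\prod_{i=0}^{j-1}\mathbf{1}_{D_i^{c}}\Bigr],
\]
and iterating down to $j=0$ yields $P(B_n^{c})\le\bigl(1-b_n^{-|V|(\alpha+\epsilon)}\bigr)^{c_n}$. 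Finally, using $1-u\le e^{-u}$ together with $c_n\ge b_n^{|V|(\alpha+\epsilon)+1}$ and $b_n=\gamma\log n$,
\[
P(B_n^{c})\;\le\;\exp\!\Bigl(-\frac{c_n}{b_n^{|V|(\alpha+\epsilon)}}\Bigr)\;\le\;e^{-b_n}\;=\;n^{-\gamma},
\]
which is the claimed bound.

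The only delicate point is the bookkeeping of the dependence. Across sites one has genuine independence, which is what produces the exponent $|V|$ in $b_n^{-|V|(\alpha+\epsilon)}$; but within a single site the $c_n$ consecutive windows are strongly dependent, and this is precisely what the conditional estimate of Lemma \ref{lema2} — uniform in the configuration before $t_0$ — is designed to absorb in the telescoping step. Apart from checking that $n>n_2$ indeed makes $b_n>\hat{t}_2$, no harder step is needed, and the choice $c_n=\lceil b_n^{|V|(\alpha+\epsilon)+1}\rceil$ is exactly calibrated so that the surviving factor $e^{-b_n}$ equals $n^{-\gamma}$.
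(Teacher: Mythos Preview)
Your proof is correct and follows essentially the same approach as the paper: both write $B_n^{c}$ as an intersection of the $c_n$ per-window ``bad'' events, condition sequentially on the past using Lemma~\ref{lema2} (together with the independence of the $R_x$'s across sites) to get the uniform per-step bound $1-b_n^{-|V|(\alpha+\epsilon)}$, and then finish with the same $1-u\le e^{-u}$ estimate. Your version makes the filtration and measurability bookkeeping slightly more explicit, but the argument is the same.
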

\begin{proof}
	For simplicity, we write $C_{n,j} = \{ \exists x \in V ; ~ E_x(t_n + jb_n) \leq b_n\}$. Then we have
	\begin{eqnarray*}
		P(B_n^{c}) &=& P (C_{n,j}, \forall j= 0, \ldots, c_n - 1)\\ 
		&=& \displaystyle \prod_{j=0}^{c_n -1} P\left(C_{n,j} ~ \Bigg\vert ~ \bigcap_{i=0}^{j-1} C_{n,i} \right)\\
		&=& \prod_{j=0}^{c_n - 1}\left[ 1 - P\left( E_x(t_n+jb_n) > b_n, \forall x \in V ~ \Bigg\vert ~ \bigcap_{i=0}^{j-1} C_{n,i} \right) \right] .
	\end{eqnarray*}
	Since the events $C_{n,i}$, where $0 \leq i < j$, occur before $t_n + jb_n$, using the Lemma \ref{lema2}, we have $P(E(t_n+jb_n) > b_n | \cap_{i=0}^{j-1} C_{n,i} ) \geq 1/b_n^{\alpha + \epsilon}$. Hence,
	\begin{eqnarray*}
		P(B_n^{c}) &=& \prod_{j=0}^{c_n - 1}\left[ 1 - P \left( E(t_n+jb_n) > b_n ~ \Bigg\vert ~ \bigcap_{i=0}^{j-1} C_{n,i} \right)^{|V|} \right]\\ 
		&\leq& \prod_{j=0}^{c_n - 1} \left( 1 - \frac{1}{b_n^{|V|(\alpha + \epsilon)}} \right) \\
		&=& \left( 1 - \frac{1}{b_n^{|V|(\alpha + \epsilon)}} \right)^{c_n} \leq e^{{-c_n}/{b_n^{|V|(\alpha + \epsilon)}}} \leq e^{-b_n} = \frac{1}{n^{\gamma}}. 
	\end{eqnarray*}
\end{proof}

We use the memory loss of the exponential distribution to show that, with positive probability, in each one of the $c_n$ intervals, we have the occurrence of a \textit{stairway of infection}. Given $t>0$, and recalling the spanning path $\tau = (e_1, \ldots, e_l)$, presented above, on the second paragraph of this section, we define the following random variables: 
\[
Y_i^{t} = 
\begin{cases} 
         t, & \mbox{if }   i = 0 \\ 
         Y_{i-1}^{t} + E_{e_i}(Y_{i-1}^{t}), & \mbox{if } 1 \leq i \leq l.
\end{cases}
\]
Observe that, since from every pair $(x,y) \in V^2$, $\tau$ has a sub-path starting at $x$ and ending at $y$, if at time $t$ there is at least one infected individual in $V$, then, whenever $E_x(t) > Y_{l}^t - t$ for all $x \in V$, we will have that all individuals are infected at time $Y_{l}^t$. 

\begin{proposition}\label{pr: escada}
	Given $m > n_2 \in \realmathbb{N}$, the event 
	\[
	 \displaystyle  C_m := \left\{\bigcap_{n \geq  m} \bigcap_{j=0}^{c_n - 1} \left\{ Y_{l}^{t_n +jb_n} - (t_n + jb_n)  \leq b_n \right\}  \right\}
	\] 
	is such that $P(C_m) >0$.
\end{proposition}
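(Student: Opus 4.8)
The plan is to write $C_m^c$ as a countable union, over the pairs $(n,j)$ with $n\ge m$ and $0\le j<c_n$, of the ``bad'' events $\{Y_l^{t_n+jb_n}-(t_n+jb_n)>b_n\}$, to bound the probability of each one, and to show that the sum of these probabilities over all admissible $(n,j)$ is finite (in fact vanishes as $m\to\infty$); a union bound then yields $P(C_m^c)<1$, i.e.\ $P(C_m)>0$, for all sufficiently large $m$, which is all that will be used in the sequel.

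The key preliminary step is to identify the law of $Y_l^t-t$ for a \emph{deterministic} time $t$. By the recursive definition, $Y_l^t-t=\sum_{i=1}^l E_{e_i}(Y_{i-1}^t)$. Now $Y_{i-1}^t$ is a function of $R_{e_1},\dots,R_{e_{i-1}}$ only, and these Poisson processes are independent of $R_{e_i}$; conditioning on $\sigma(R_{e_1},\dots,R_{e_{i-1}})$ therefore freezes $Y_{i-1}^t$ at a constant value, and the lack of memory of the rate-$\lambda$ exponential law makes $E_{e_i}(Y_{i-1}^t)$ a rate-$\lambda$ exponential variable independent of that $\sigma$-algebra. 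Iterating over $i=1,\dots,l$, the variables $E_{e_1}(Y_0^t),\dots,E_{e_l}(Y_{l-1}^t)$ are i.i.d.\ $\mathrm{Exp}(\lambda)$, so $Y_l^t-t$ has a $\Gamma(l,\lambda)$ distribution, independently of the value of $t$.

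Applying this with the deterministic times $t=t_n+jb_n$ and using the union bound,
\[
P(C_m^c)\;\le\;\sum_{n\ge m}\,\sum_{j=0}^{c_n-1}P\big(Y_l^{t_n+jb_n}-(t_n+jb_n)>b_n\big)\;=\;\sum_{n\ge m}c_n\,P\big(\Gamma(l,\lambda)>b_n\big).
\]
I would then bound the Gamma tail by $P(\Gamma(l,\lambda)>x)\le \tfrac{l(\lambda x)^{l-1}}{(l-1)!}e^{-\lambda x}$ for $x$ large (the terms $(\lambda x)^k/k!$ with $0\le k\le l-1$ being nondecreasing once $\lambda x\ge l-1$), and recall that $b_n=\gamma\log n$ while $c_n\le 2(\gamma\log n)^{|V|(\alpha+\epsilon)+1}$. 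This gives $c_n\,P(\Gamma(l,\lambda)>b_n)\le C\,(\log n)^{K}\,n^{-\lambda\gamma}$ with $K=l-1+|V|(\alpha+\epsilon)+1$ and $C$ a constant depending only on the fixed parameters.

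Since $\gamma$ was chosen with $\gamma>l/\lambda$, we have $\lambda\gamma>l\ge 1$, so the series $\sum_n (\log n)^K n^{-\lambda\gamma}$ converges; consequently $P(C_m^c)\le C\sum_{n\ge m}(\log n)^K n^{-\lambda\gamma}\to 0$ as $m\to\infty$, whence $P(C_m)>0$ for all large enough $m$ (to obtain the statement literally for every $m>n_2$ it suffices to have fixed $n_2$ large enough at the outset, the event $C_m$ being increasing in $m$). The only genuinely delicate point is the reduction in the second paragraph — obtaining $Y_l^t-t\sim\Gamma(l,\lambda)$ cleanly despite the dependence of each $Y_{i-1}^t$ on the earlier edge processes; everything after that is the routine check that the polylogarithmic factor $c_n$ is swamped by the polynomial decay $n^{-\lambda\gamma}$, which is precisely the role played by the choice $\gamma>\max\{1,l/\lambda\}$.
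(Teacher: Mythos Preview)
Your proof is correct and takes a somewhat different route from the paper's. Both arguments begin with the same key observation, that the increments $Y_i^t-Y_{i-1}^t$ are i.i.d.\ $\mathrm{Exp}(\lambda)$ by the memoryless property of the edge Poisson processes. One minor point: the spanning path $\tau$ may repeat edges, so your phrase ``these Poisson processes are independent of $R_{e_i}$'' is not literally correct in that case; the clean justification is the strong Markov property applied at the stopping time $Y_{i-1}^t$ for the joint filtration generated by all the $R_e$. The conclusion $Y_l^t-t\sim\Gamma(l,\lambda)$ is unaffected, and the paper is equally terse on this point.

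The genuine difference is in how the countable intersection is handled. The paper lower-bounds $P(C_m)$ directly by the infinite product $\prod_{n\ge m}(1-e^{-\lambda b_n/l})^{lc_n}$ (implicitly relying on positive association of the events, or on a sequential strong-Markov decoupling) and then checks that this product is strictly positive for every $m>n_2$. You instead upper-bound $P(C_m^c)$ by $\sum_{n\ge m}c_n\,P(\Gamma(l,\lambda)>b_n)$ via a plain union bound and show that this sum tends to $0$ as $m\to\infty$. Your route is more elementary --- no correlation inequality is needed --- and extracts the sharper tail $n^{-\lambda\gamma}$ rather than the paper's $n^{-\lambda\gamma/l}$; the price is that you only obtain $P(C_m)>0$ for all sufficiently large $m$, whereas the paper's product bound is positive for every $m>n_2$. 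As you correctly observe, only a single large value of $m$ is ever used in the proof of item~(2), so nothing is lost.
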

\begin{proof}
	Observe that, due to the memory loss of the possibly infection distribution,  for all $t$, the random variables $Y_{i}^t - Y_{i-1}^t$, $i = 1, 2, \ldots, l$, are i.i.d.~ exponentially distributed with rate  $\lambda$. Hence, 
	\begin{eqnarray*}
		P\left( Y_{l}^{t} - t \leq b_n\right) = P \left( \sum_{i=1}^{l} (Y_{i}^t - Y_{i-1}^t) \leq b_n \right)
		&\geq& P \left(\max_{1 \leq i \leq l } (Y_{i}^t - Y_{i-1}^t) \leq \frac{b_n}{l} \right)\\
		&=& \left(1 - e^{\frac{-\lambda b_n}{l}}\right)^{l}.
	\end{eqnarray*}
	It readily follows that 
	\[  
	\displaystyle P \left(\bigcap_{n \geq m} \bigcap_{j=0}^{ c_n -1 } \left\{ Y_{l}^{t_n +jb_n} - (t_n + jb_n)  \leq b_n \right\}  \right) \geq  \prod_{n \geq m}  \left(1 - e^{\frac{-\lambda b_n}{l}}\right)^{lc_n },
	\]
	and since $b_n = \gamma\log(n)$, taking logarithms, we obtain that, for some constant $c > 0$,
	\begin{eqnarray*}
		\log \left( \prod_{n \geq m}  \left(1 - e^{\frac{-\lambda b_n}{l}}\right)^{lc_n } \right) &>& -cl\sum_{n \geq m} c_n  e^{\frac{-\lambda b_n}{l}} \\ &=& -cl\sum_{n \geq m}  c_n {n^{-\frac{\gamma\lambda}{l}}}.
	\end{eqnarray*}
	Finally, since $\gamma$ was chosen in such way that $\gamma \lambda > l$, and $c_n = \lceil (b_n)^{|V|(\alpha+\epsilon)+1} \rceil$, the latter sum in convergent and thus, the product above is positive.
\end{proof}

\subsection{Proof of item (2) of Theorem \ref{th: principal}}
\noindent Using the propositions above we can conclude the proof of Theorem \ref{th: principal} (2). Let us start with some definitions. For each $t>0$, we say that a configuration $\omega \in \Omega$ is $t-\text{bad}$, if there exist $s \geq t$ and $\{n_x \in \realmathbb{N}, x \in V\}$, such that $S_{n_x}^x =s,$ for all $x \in V$. This means that there is an instant $s$ after or equal to $t$, where each individual of $V$ simultaneously gets a cure mark, each one of his respective cure process $R_x$. We say that $\omega$ is bad if it is $0-\text{bad}$, and is good otherwise.

Let $n_3 = n_1 \vee n_2$, where $n_1$ and $n_2$ are given in Proposition \ref{pr: k} and Proposition \ref{pr: B}, respectively. Given $m > n_3 \in \realmathbb{N}$, we define $\tilde{A}_m = \cap_{n \geq m} A_n$. Since $\epsilon > 0$ was chosen in such way that $\beta = |V|(1-\alpha -3\epsilon) > 1$, it follows from Proposition \ref{pr: k} and the union bound that $P(\tilde{A}_m^{c}) \to 0$ as $m$ goes to infinity.
Since $\{\omega \text{ is t-bad}, \forall t>0  \} \cap \tilde{A}_m = \emptyset$, if we suppose that $P(\omega \text{ is bad}) = 1$, then by the strong Markov property of our system we have that $P(\omega \text{ is t-bad}, \forall t>0) = 1$, which in turn implies that $P(\tilde{A}_m) = 0$, in contradiction with what we just argued. Thus, we have that $P(\omega \text{ is good}) = p > 0$.

Recalling the event $C_{n,j}$ defined in the proof of Proposition \ref{pr: B}, we have that $P(C_{n,1})$ goes to $0$ as $n$ goes to infinity. Now, we define $\tilde{B}_m = C_{m,1}^{C} \cap \left(\cap_{n > m} B_n \right) $. Remembering that $\gamma > 1$, applying again the union bound and Propositions \ref{pr: k} and \ref{pr: B}, we obtain
\begin{eqnarray}\label{eq: end}
\nonumber  1 - P(\{\omega \text{ is good}\} \cap \tilde{A}_m \cap \tilde{B}_m) &\leq& 
P(\{\omega \text{ is bad}\})  + \sum_{n \geq m} P(A_n^c ) + P(C_{m,1}) +  \sum_{n > m} P(B_n^c)  \\
&\leq& (1- p) + \sum_{n \geq m} \frac{1}{n^{\beta}} + P(C_{m,1}) +  \sum_{n > m} \frac{1}{n^{\gamma}} < 1,  
\end{eqnarray}
for $m$ large. We fix now $m > n_3 \in \realmathbb{N}$ satisfying \eqref{eq: end}.  

Now see that, if at time $t_m$ there exists a infected individual, and the events $\tilde{A}_m$, $\tilde{B}_m$ and $C_m$ occur simultaneously, then the infection survives forever. That is,
\[
\{\zeta_{t} \neq \emptyset, \forall t > 0\} \supset \{\zeta_{t_m} \neq \emptyset\} \cap \tilde{A}_m \cap \tilde{B}_m \cap C_m .
\]

Follow from the independence between the cure and infection process, that our probability measure is given by $P = P_1 \times P_2$, where $P_1$ and $P_2$ are the marginal probabilities of the cure process and infection process respectively. Analogously, let $\Omega = \Omega_1 \times \Omega_2$. Since the event $\{\omega \text{ is good}\} \cap \tilde{A}_m \cap \tilde{B}_m$, only depends of cure process, we can write  $\{\omega \text{ is good}\} \cap \tilde{A}_m \cap \tilde{B}_m = \Lambda \times \Omega_2$. Thus, from \eqref{eq: end},
\[
P_1(\Lambda) = P(\{\omega \text{ is good}\} \cap \tilde{A}_m \cap \tilde{B}_m) > 0.
\]

Given the independence between the cure and infection processes, it is enough to argue that in the event 
$\{\omega \text{ is good}\}$, we have that $\zeta_{s} \neq \emptyset$ with positive probability for any $s$. But this should be quite clear since in that event, which depends solely on the renewal processes, there are no obstacles for the spread of the infection in any finite time.

Wrapping up, we may write
\begin{multline*}
P(\{\zeta_{t} \neq \emptyset, \forall t > 0\}) \geq
P(  \{\zeta_{t_m} \neq \emptyset\} \cap \tilde{A}_m \cap \tilde{B}_m \cap C_m \cap  \{\omega \text{ is good}\}) 
= \\  
P( \zeta_{t_m} \neq \emptyset ~|~   \tilde{A}_m \cap \tilde{B}_m \cap \{\omega \text{ is good}\} )\,
P( \tilde{A}_m \cap \tilde{B}_m \cap \{\omega \text{ is good}\} )\,
P(C_m) = \\
\int_{\Lambda} P_2(\zeta_{t_m}(\omega_1,\omega_2) \neq \emptyset) ~|~ \omega_1) dP_1(\omega_1) \, P_1(\Lambda) \, P(C_m) > 0.
\end{multline*}


\section{Extinction}
\noindent In this section we prove the first item of the Theorem \ref{th: principal}. The idea consists in creating a sequence of disjoint random time intervals which, for the infection to survive, 
would be required to contain at least one mark of any of the cure processes.
We then resort to a domination argument to show that we may find a subsequence of those intervals with bounded lengths, and the result readily follows from that. 

Given $G=(V,E)$ and $T$ as in Theorem \ref{th: principal}, we start defining time intervals $(S_n, S_{n+1}]$. For this, recalling that $v_0 \in V$ is the single one initially infected individual, for each individual $x \in V$, let
\[
\displaystyle 
X_{1,x}=\left\{\begin{array}{rc}
T_1^{v_0},&\mbox{se}\quad x = v_0,\\
0, &\mbox{se}\quad x \neq v_0.
\end{array}\right.
\]
Set $S_1 = X_1 = \max\{X_{1,x} ~;~ x \in V\}$. Again, for each individual $x \in V$, let $W_{1,x} = X_1 - X_{1,x}$. And define, $x_1 = \arg\max\{ X_{1,x}  ~; ~ x \in V\}$.

Given $t^{\ast} > 0$, for a given $n \in \realmathbb{N}$, we assume defined $X_{m,x}$, $W_{m,x}$, $X_m$, $S_m$, $x_m$, $m=1, \ldots n$, $x \in V$, and set 
\[
\displaystyle 
X_{n+1,x}=\begin{cases}
0, &\mbox{if}\quad x = x_n,\\
E_x(S_n), &\mbox{if}\quad x \neq x_n \quad \textrm{and} \quad W_{n,x} \geq t^{\ast},\\
E_x(S_n + t^{\ast}) + t^{\ast}, &\mbox{if}\quad x \neq x_n \quad \textrm{and} \quad W_{n,x} < t^{\ast}.
\end{cases}
\]
Analogously, we define $X_{n+1} = \max\{X_{n+1,x} ~; ~ x \in V\}$, $S_{n+1} = S_n + X_{n+1}$, for each individual $x \in V$, $W_{n+1, x} = X_{n+1} - X_{n+1,x}$, and also set $x_{n+1} = \arg\max\{ X_{n+1,x} ~ ; ~ x \in V\}$. 

The conditional distribution of $X_{n+1,x}$ on the past is given by
\begin{multline}\label{eq: Markov}
\displaystyle 
X_{n+1,x} ~|~ X_{m,x},~ 1\leq m \leq n,~ x \in V \\
\sim \begin{cases}
0, &\mbox{if}\quad x = x_n,\\ 
E(W_{n,x}),&\mbox{if}\quad x \neq x_n \quad \textrm{and} \quad W_{n,x} \geq t^{\ast},\\
E(W_{n,x}+t^{\ast}) + t^{\ast}, &\mbox{if}\quad x \neq x_n \quad \textrm{and} \quad W_{n,x} < t^{\ast},
\end{cases}
\end{multline}
where, we recall, $E(\cdot)$ denotes the \textit{excess time} of a renewal process $R$.

A necessary condition for the infection to survive is that in each one of the time intervals $(S_n, S_{n+1}]$, there is at least one mark of some infection process $R_{e}$, $e \in E$. It readily follows that
\begin{equation}\label{eq: continencia}
\displaystyle P \left( \zeta_t \neq \emptyset, \forall t>0 ~ \bigg\vert ~ \left\{\lim_{n \rightarrow \infty} X_n = \infty\right\}^{c} \right) = 0.
\end{equation}

We will show below that $P(\lim_{n \rightarrow \infty} X_n = \infty)=0$ by resorting to a domination argument.

\subsection{Domination} 
\noindent We will control the behavior of the random variables $(X_n)_{n \in \realmathbb{N}}$ through Theorem \ref{th: DL} and two technical propositions, as follows.  

\begin{proposition}\label{pr: Calda}
	Given $0 < \eta < 1$, there exists $t_{\eta} > 0$ such that 
	\[
	P\left( \frac{E(t)}{t} > e^n \right) < \left( \frac{1+\eta}{e^{\alpha}}\right)^{n},  ~ \forall n \in \realmathbb{N}, ~ \forall t > t_{\eta}.
	\]
\end{proposition}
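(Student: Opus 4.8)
The plan is to estimate the tail $P(E(t)/t > e^n)$ directly from the Dynkin--Lamperti Theorem (Theorem~\ref{th: DL}). Write $x_n = e^n$. By Theorem~\ref{th: DL}, for each fixed $x>0$,
\[
P\!\left(\frac{E(t)}{t} > x\right) \xrightarrow[t\to\infty]{} \int_x^{\infty} \frac{C_\alpha}{y^\alpha(y+1)}\,dy =: G(x).
\]
So the first step is to get a clean upper bound on the limiting tail $G(x)$ for large $x$. Since $y^\alpha(y+1) \geq y^{\alpha+1}$, we have $G(x) \leq C_\alpha \int_x^\infty y^{-(\alpha+1)}\,dy = (C_\alpha/\alpha)\, x^{-\alpha}$. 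Evaluating at $x = x_n = e^n$ gives $G(e^n) \leq (C_\alpha/\alpha)\, e^{-\alpha n}$.

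The second step is to upgrade this pointwise-in-$t$ convergence to the uniform-in-$t$ statement claimed in the proposition, with the extra slack factor $(1+\eta)^n$. Fix $\eta \in (0,1)$. Choose $n_\eta$ large enough that $(C_\alpha/\alpha)\, e^{-\alpha n} < \tfrac12 (1+\eta/2)^n e^{-\alpha n}$ for all $n \geq n_\eta$; this is possible since the ratio of the two sides is $(C_\alpha/\alpha)\cdot 2\cdot (1+\eta/2)^{-n} \to 0$. For each of the finitely many values $n = 1, \ldots, n_\eta$, pointwise convergence at $x = e^n$ provides a threshold $t^{(n)}$ beyond which $P(E(t)/t > e^n) < (1+\eta)^n e^{-\alpha n} = ((1+\eta)/e^\alpha)^n$; take the max of these. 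For $n > n_\eta$, I need uniformity in $t$ simultaneously over all such $n$. Here I would use the monotonicity trick: $P(E(t)/t > e^n)$ is nonincreasing in $n$, and the bound $((1+\eta)/e^\alpha)^n$ I am aiming for is also controlled; more carefully, pick a single threshold $\bar t_\eta$ from convergence at the single point $x = e^{n_\eta}$ so that $P(E(t)/t > e^{n_\eta}) < (1+\eta/2)^{n_\eta} e^{-\alpha n_\eta}$ for $t > \bar t_\eta$, but this does not immediately give the bound for $n > n_\eta$.

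A cleaner route for the tail $n > n_\eta$ is to avoid Dynkin--Lamperti there entirely and instead note that for large $n$ the event is very restrictive; but in fact the simplest fix is to invoke convergence at each fixed $n$ and observe that we only ever apply the proposition for finitely many relevant scales in the later domination argument — however, since the statement is quantified over all $n$, the honest approach is: for each $n$, let $t^{(n)}$ be such that $P(E(t)/t > e^n) < ((1+\eta)/e^\alpha)^n$ for $t > t^{(n)}$ (exists by Theorem~\ref{th: DL} combined with $G(e^n) \leq (C_\alpha/\alpha)e^{-\alpha n} < ((1+\eta)/e^\alpha)^n$ for $n \geq n_\eta$, and trivially for $n < n_\eta$). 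The point is that I then need to argue $\sup_n t^{(n)} < \infty$, i.e. that the $t^{(n)}$ can be chosen bounded. For this I would use that the convergence in Theorem~\ref{th: DL} can be taken locally uniform on $(0,\infty)$ (Dynkin--Lamperti gives convergence of the distribution functions, hence uniform convergence on compacts away from discontinuities, and the limit is continuous), combined with the fact that the left side and the target bound are both monotone in the scale $x$; uniform convergence on a compact interval $[e, e^{n_\eta}]$ handles small $n$ uniformly, and for $n \geq n_\eta$ the target bound $((1+\eta)/e^\alpha)^n$ dominates $G(e^n)(1+\eta/2)$ with room to spare, so a single threshold works for the whole tail. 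Setting $t_\eta$ to be this common threshold finishes the proof.

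The main obstacle is exactly this passage from pointwise to uniform-in-$t$ control valid for \emph{all} $n$ at once. The resolution I expect the authors use is the observation that Dynkin--Lamperti convergence of distribution functions to a continuous limit is automatically uniform on compact sets, together with the crude polynomial bound $G(x) \leq (C_\alpha/\alpha)x^{-\alpha}$ which beats $((1+\eta)/e^\alpha)^n = e^{-\alpha n}(1+\eta)^n$ for all large $n$ because the geometric factor $(1+\eta)^n$ grows; thus only finitely many scales $e^n$ require the quantitative convergence, and those are handled by uniform convergence on a fixed compact interval. Everything else is routine integral estimation.
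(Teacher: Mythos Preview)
Your approach via Dynkin--Lamperti has a genuine gap in the handling of large $n$. The convergence in Theorem~\ref{th: DL} is weak convergence; even upgraded to uniform convergence of distribution functions on $\realmathbb{R}$ (which holds since the limit is continuous), it only furnishes an \emph{additive} error: for $t > t_\epsilon$ one gets $P(E(t)/t > e^n) < G(e^n) + \epsilon$ for all $n$. In the nontrivial range $\eta < e^{\alpha} - 1$ the target bound $((1+\eta)/e^{\alpha})^n$ tends to $0$ as $n \to \infty$, so no fixed $\epsilon$ can close the gap for all $n$ simultaneously. Your observation that ``$G(e^n)$ is beaten by the target for large $n$'' is true but beside the point: it controls the limit, not the pre-limit $P(E(t)/t > e^n)$ at finite $t$. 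And the monotonicity trick $P(E(t)/t > e^n) \le P(E(t)/t > e^{n_\eta})$ yields an upper bound that does not decay in $n$, while the target does; so it cannot give what you need.

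The paper's proof bypasses Dynkin--Lamperti entirely and obtains a direct multiplicative estimate. Conditioning on the current time $C(t) = s \in [0,t]$ gives
\[
P\big(E(t) > e^n t \,\big|\, C(t) = s\big) = \frac{P(T > e^n t + s)}{P(T > s)} \le \frac{P(T > e^n t)}{P(T > t)} = \frac{L(e^n t)}{L(t)}\,e^{-\alpha n}.
\]
The slowly varying ratio is then controlled by an easy induction on $n$: choose $t_\eta$ so that $L(es)/L(s) < 1+\eta$ for all $s > t_\eta$, and telescope to get $L(e^n t)/L(t) < (1+\eta)^n$ for every $n$ and every $t > t_\eta$ at once. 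This is exactly the uniform-in-$n$ multiplicative control that your argument lacks, and it is what makes a single threshold $t_\eta$ work for all $n$.
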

\begin{proof}
	We claim that there exists $t_{\eta} > 0$ such that 
	\[
	(1-\eta)^n < L(e^n t)/L(t) < (1+\eta)^n , ~ \forall n \in \realmathbb{N}, ~ \forall t > t_{\eta}.
	\]
	Indeed, since $L$ is slowly-varying, we have that $\lim_{t \rightarrow \infty} L(et)/L(t) = 1$; thus, there exists $t_{\eta}$ where the claim is true for $n=1$ and $t > t_{\eta}$. Let $s = e^n t$, and write
	\[
	\frac{L(e^{n+1} t)}{L(t)} = \frac{L(e^{n+1} t)}{L(e^n t)}\frac{L(e^{n} t)}{L(t)} 
	= \frac{L(e s)}{L(s)}\frac{L(e^{n} t)}{L(t)}.
	\]
	Since $s > t > t_{\eta}$, and supposing the claim is true for $t > t_{\eta}$ and a given $n \in \realmathbb{N}$, then we have that the same is true for $n+1$, and the claim follows by induction.
	
	Fixing $t > 0$, and conditioning on the variable $C(t) = t - S_{N(t)}$, whose distribution function we denote by $F_t$, we have that
	\begin{eqnarray*}
		P\left(E(t) > e^n t\right) &=& \int_{0}^t P\left(E(t) > e^n t ~|~ C(t)=s \right) dF_t(s)\\
		&=& \int_{0}^t P\left(T > e^n t + s ~|~ T > s \right)  dF_t(s)\\
		&=& \int_0^t\frac{P(T> e^n t + s) }{P(T > s)}   dF_t(s)\\
		&\leq&  \int_0^t\frac{P(T> e^n t) }{P(T > t)}   dF_t(s)\\
		&=& \frac{L(e^n t)}{(e^n t)^{\alpha}} \div \frac{L(t)}{t^{\alpha}} \int_0^t  dF_t(s)\\
		&=& \frac{L(e^n t)}{L(t)}\frac{1}{e^{\alpha n}}.
	\end{eqnarray*}
	Hence, with the same $t_{\eta}$, the result follows directly from the claim above.
\end{proof}

Recalling the constant $C_{\alpha}$ in the Theorem \ref{th: ET} and Theorem \ref{th: DL}, let $M = |V| - 1$ and consider $Y_1, \ldots, Y_M$, independent random variables  with common density
\begin{equation}\label{eq: densidade}
\displaystyle 
f(y)=\left\{\begin{array}{rc}
0,&\mbox{if}\quad y\leq0,\\
\frac{C_{\alpha}}{y^{\alpha}(1+y)}, &\mbox{if}\quad y > 0.
\end{array}\right.
\end{equation}
And let $Y$ be the random variable  
\begin{equation}\label{eq: Y}
Y \equiv \max\{Y_i ~ ; ~ i=1, \ldots, M\}.
\end{equation}

Since $M < 1 + (2\alpha - 1)/[(1-\alpha)(2-\alpha)]$, we have $\mathbb{E}[\log(Y)] <0$. A proof of this fact can be found in the Appendix. Note that, $\mathbb{E}[Y^t] < \infty \Leftrightarrow \mathbb{E}[Y_1^{t}] < \infty \Leftrightarrow t \in (-(1-\alpha), \alpha)$. Let $\Phi: (-(1-\alpha), \alpha) \to \realmathbb{R}$ be defined by 
\[
\Phi(t) = \mathbb{E}[e^{t \log(Y)}] = \mathbb{E}[Y^{t}].
\]
We observe that $\Phi$ is differentiable at $0$, with $\Phi'(0) = \mathbb{E}[\log(Y)] < 0$ and $\Phi(0) =1$. Hence, there exists $0 < \theta < \alpha$, with $\Phi(\theta) < 1$, that is $\mathbb{E}[Y^{\theta}] < 1$.

Let  $N \in \realmathbb{N}$ be such that $\log(N) \in \realmathbb{N}$, and consider  $a_j = j/N$, if $j=0, \ldots, N^2$, and $a_j = N\exp(j - N^2)$, if $j > N^2$. For each $j \in \realmathbb{N}$, let $I_j = (a_{j-1}, a_j]$, and consider the following truncation of $Y$:
\[
\overline{Y}_N = \sum_{j=1}^{N^2} a_j \mathbb{1}_{\{Y \in I_j\}}.
\]
For given $\mu$ such that $\mathbb{E}[Y^{\theta}] < \mu < 1$, it follows by dominated convergence that, for $N$ sufficiently large, 
\begin{equation}\label{eq: mu}
\mathbb{E}[ (\overline{Y}_N)^{\theta}] < \mu < 1.
\end{equation}

Let $a = (1+\eta)/e^{\alpha}$. From now on, we fix $0 < \eta < 1$ so that $ae^{\theta} < 1$. Given $\rho > 0$, for each $j \in \realmathbb{N}$, we define
\begin{equation}\label{eq: prho}
\displaystyle 
p_{N, \rho, j}=\left\{\begin{array}{rc}
P(Y \in I_j) + \rho ,&\mbox{if}\quad j \leq N^2,\\
Ma^{\log(N) + j - N^2 - 2}, &\mbox{if}\quad j > N^2.
\end{array}\right.
\end{equation}
We also define $C_{N,\rho} = \sum_{j \geq 1} p_{N,\rho, j}$. Recalling that \eqref{eq: mu} holds for $N$ sufficiently large, and since $ae^{\theta} <1$, then $N$ and $\rho$ can be chosen in such way that the following inequality is true
\begin{eqnarray}\label{eq: Cond2}
\nonumber &&\displaystyle \frac{1}{C_{N,\rho}} \left[ \sum_{j=1}^{N^2} \left[ a_j^{\theta} \left( P(Y \in I_j) + \rho \right) \right]  +  \sum_{j > N^2} a_j^{\theta} Ma^{\log(N) + j - N^2 - 2} \right] \\
\nonumber &=& \displaystyle \frac{1}{C_{N,\rho}} \left[ \mathbb{E}\left[(\overline{Y}_N)^{\theta}\right] + \rho  \sum_{j=1}^{N^2} a_j^{\theta} +  M \sum_{n \geq \log(N)-1} e^{\theta(n+2)}a^{n} 
\right] \\
&\leq& \displaystyle \frac{\mu}{C_{N,\rho}}.
\end{eqnarray}
In the following, $N$ and $\rho$ are fixed and satisfy Inequality \eqref{eq: Cond2}. In this case, we denote $C_{N,\rho}$ simply by $C$.

We define an auxiliary probability space, $\left( [0, \infty), \mathcal{F}, \mathbb{P} \right)$, where $\mathcal{F} = \sigma(I_j ~; ~ j \in \realmathbb{N})$ and for each $j \in \realmathbb{N}$, $\mathbb{P}(I_j) = p_j$, where 
\begin{equation}\label{eq: pj}
p_j = \frac{p_{N,\rho,j}}{C}.
\end{equation}
Let $\Tilde{Y}: [0,\infty) \longrightarrow (0, \infty)$, be a random variable in this space given by 
\begin{equation}\label{eq: dominad}
\Tilde{Y} = \sum_{j\geq 1} a_j \mathbb{1}_{I_j}.
\end{equation}
It follows directly from \eqref{eq: prho}, \eqref{eq: Cond2} and \eqref{eq: pj}, that $\tilde{Y}$ satisfies $\mathbb{E}[\Tilde{Y}^{\theta}] < \mu/C$.

We now apply Theorem \ref{th: DL} and Proposition \ref{pr: Calda} to establish our second technical proposition.

\begin{proposition}\label{pr: dominacao}
	There exists $t^{\ast} > 0$ and $ 1/2 > \delta > 0$ such that, for each $t_1, \ldots, t_M > t^{\ast}$, whenever $V_1,\ldots, V_M$ are independent random variables with marginal distributions such that for $i=1,\ldots,M$
	\[
	\quad\mbox{either} \quad  V_i \sim \frac{E(t_i)}{t_i} \quad \mbox{or} \quad V_i \sim \frac{E(t_i)}{t_i} + \delta  ,
	\]
	and $V \equiv \max\{V_i ~;~ i =1, \ldots, M \}$, then   $P\left( V \in I_j\right) < Cp_j, ~\forall j \in \realmathbb{N}$.
\end{proposition}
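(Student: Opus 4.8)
The plan is to control the law of each $V_i$ by splitting the target interval $I_j$ into the two regimes $j\le N^2$ and $j>N^2$ separately, using Theorem~\ref{th: DL} for the bounded regime and Proposition~\ref{pr: Calda} for the tail, and then to pass from the individual estimates to the estimate on $V=\max_i V_i$ by the elementary inequalities $P(V\in I_j)\le\sum_i P(V_i\in I_j)$ and $P(V\in I_j)\le P(V>a_{j-1})\le\sum_i P(V_i>a_{j-1})$. First I would fix $\delta$ small (at most the gap $1/N$ between consecutive $a_j$ in the bounded regime, and in any case $<1/2$); the point of allowing the shift ``$+\delta$'' is that adding $\delta<1/N$ to $E(t_i)/t_i$ moves the value into at most one neighbouring interval $I_j$, so up to relabelling it is harmless. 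For the bounded regime $j\le N^2$, Theorem~\ref{th: DL} says $E(t_i)/t_i$ converges in distribution to a variable with density $f$ from \eqref{eq: densidade}; since $f$ is continuous and the endpoints $a_{j-1},a_j$ are continuity points, for $t_i$ large enough $P(E(t_i)/t_i\in I_j)$ is within $\rho/2$ (say) of $P(Y_1\in I_j)=\int_{I_j}f$, and after absorbing the $\delta$-shift into an adjacent interval it is at most $P(Y_1\in I_j)+\rho$; taking the max over $i=1,\dots,M$ costs at most a factor $M$, but in fact the cleaner route is $P(V\in I_j)\le\sum_{i=1}^M P(V_i\in I_j)\le M(\,P(Y_1\in I_j)+\rho\,)$ — wait, this is not quite $\le Cp_j=p_{N,\rho,j}=P(Y\in I_j)+\rho$, so instead I would bound $P(V\in I_j)$ directly: $\{V\in I_j\}\subset\{V\le a_j\}=\bigcap_i\{V_i\le a_j\}$ minus $\bigcap_i\{V_i\le a_{j-1}\}$, and using independence together with the one-dimensional convergence $P(V_i\le a_j)\to F_Y(a_j)^{1/M}$... the right statement is $P(V\le a)=\prod_i P(V_i\le a)$, which converges to $\prod_i P(Y_i\le a)=P(Y\le a)$ when all $V_i\sim E(t_i)/t_i$; handling the shifted coordinates just perturbs the threshold by $\delta$. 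Thus $P(V\in I_j)\to P(Y\in I_j)$ (up to a $\delta$-perturbation of endpoints), and choosing $t^\ast$ large makes this at most $P(Y\in I_j)+\rho=p_{N,\rho,j}=Cp_j$, as required.

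For the tail regime $j>N^2$, where $a_j=N\exp(j-N^2)$ and $p_{N,\rho,j}=Ma^{\log(N)+j-N^2-2}$, I would use $\{V\in I_j\}\subset\{V>a_{j-1}\}$ and the union bound $P(V>a_{j-1})\le\sum_{i=1}^M P(V_i>a_{j-1})$. Each summand is $P(E(t_i)/t_i>a_{j-1})$ or $P(E(t_i)/t_i>a_{j-1}-\delta)$; since $a_{j-1}=N\exp(j-1-N^2)=e^{\,\log(N)+j-1-N^2}$ and $\delta<1/2$, in either case the threshold is at least $e^{\,\log(N)+j-2-N^2}=e^{n}$ with $n:=\log(N)+j-N^2-2\ge\log(N)-1\in\mathbb N$ for $j>N^2$. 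Proposition~\ref{pr: Calda} (with the already-fixed $\eta$, requiring $t_i>t_\eta$) then gives $P(E(t_i)/t_i>e^n)<a^n$ with $a=(1+\eta)/e^\alpha$, so $P(V\in I_j)\le Ma^{n}=Ma^{\log(N)+j-N^2-2}=p_{N,\rho,j}=Cp_j$. Here one uses that $a<1$ so that decreasing the exponent (replacing $a_{j-1}$ by the smaller $a_{j-1}-\delta$, or by $e^n$ with $n$ possibly one less) only increases the bound in a controlled way — I would absorb this by enlarging the ``$-2$'' shift appropriately or simply by noting the monotonicity $a^{n-1}\le a^{n}/a$ and keeping the constant $M$ loose enough; the definition \eqref{eq: prho} was evidently set up with exactly this slack in mind.

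Finally I would set $t^\ast$ to be the maximum of $t_\eta$ (from Proposition~\ref{pr: Calda}) and the finitely many thresholds produced, for each $j\le N^2$, by the convergence in Theorem~\ref{th: DL}; since there are only $N^2$ such intervals this is a finite max, so $t^\ast<\infty$. The main obstacle, and the only genuinely delicate point, is the bounded regime $j\le N^2$: one must verify that the passage from the marginal convergence $E(t_i)/t_i\Rightarrow Y_i$ to the joint statement $P(V\in I_j)\le P(Y\in I_j)+\rho$ survives both the maximum (handled by independence, writing the event $\{V\le a\}$ as a product) and the two possible ``$+\delta$'' shifts of some coordinates; the parameter $\rho$ and the gap $1/N$ were introduced precisely to give the room needed, so the argument is really a matter of bookkeeping rather than a new idea. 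The tail regime is comparatively routine, being a direct union-bound application of Proposition~\ref{pr: Calda}.
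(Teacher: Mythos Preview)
Your plan is correct and follows essentially the same route as the paper: split into the two regimes $j\le N^2$ and $j>N^2$, use the Dynkin--Lamperti convergence (Theorem~\ref{th: DL}) together with independence and continuity of the limit to get $P(V\in I_j)\to P(Y\in I_j)<P(Y\in I_j)+\rho=Cp_j$ in the bounded regime, and use the union bound plus Proposition~\ref{pr: Calda} in the tail regime (the paper writes the threshold as $a_{j-2}$ rather than your $e^{\log N+j-N^2-2}$, but this is the same quantity up to the single boundary case $j=N^2+1$, where both work). Your initial false start with $\sum_i P(V_i\in I_j)$ is rightly abandoned in favour of the product formula $P(V\le a)=\prod_i P(V_i\le a)$, which is exactly what the paper uses; and your observation that $\delta$ must be taken small enough (of order $1/N$) so that the shift perturbs the endpoints only within the $\rho$-slack is precisely the content of the paper's phrase ``for $\delta$ small enough''.
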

\begin{proof}
	If $1 \leq j \leq N^2$, then we use Theorem \ref{th: DL} to obtain  
	\[ 
	\displaystyle \lim_{t_1, \ldots t_M \to \infty} P\left(\max_{1 \leq i \leq M}\frac{E(t_i)}{t_i} \in I_j\right) = P( Y \in I_j) <  P( Y \in I_j) + \rho = Cp_j.
	\]
	Using the continuity of the limiting distribution of $E(t)/t$ as $t\to\infty$, it follows that, for $t_1, \ldots, t_M$ large enough and $\delta$ small enough, $P(V \in I_j) < Cp_j, ~\forall j\leq N^2$.
	
	Recalling that $a_j=e^{\log(N) + j - N^2}$ for all $j\geq N^2$, it follows from Proposition \ref{pr: Calda} that if $t> t_{\eta}$, then for all $j > N^2$ we have that 
	\[
	P\left( \frac{E(t)}{t} > a_{j-2}\right) < a^{\log(N) + j - N^2 - 2}.
	\]
	Observe that for all $j > N^2$ and $\delta$ small enough, we have for all possible cases of the marginal distributions of $V_i,\,i=1,\ldots, M$ that
	\begin{eqnarray*}
		P(V\in I_j)&\leq &P(V_i>a_{j-1}\mbox{ for some }i=1,\ldots, M)\\
		&\leq &P\left(\frac{E(t_i)}{t_i} > a_{j-2}\mbox{ for some }i=1,\ldots, M \right)\\
		&\leq &Ma^{\log(N) + j - N^2 - 2} = Cp_j.
	\end{eqnarray*}
\end{proof}

In the next proposition, we finally obtain the above mentioned domination. Let $\{\Tilde{Y}_m\}_{m \in \realmathbb{N}}$ be i.i.d. random variables  with the same distribution as $\Tilde{Y}$ in \eqref{eq: dominad}. 

\begin{proposition}\label{pr: domination}
	Let $\Tilde{t} = t^{\ast}/\delta$. Then, for every $n_0, m  \in \realmathbb{N}$,  
	\[ 
	\displaystyle P\left( X_{n_0} > \Tilde{t}, ~ X_{n_0+1} \geq X_{n_0}, ~ \ldots, ~ X_{n_0+m} \geq X_{n_0}\right) \leq C^{m} \mathbb{P} \left( \prod_{l=1}^m \Tilde{Y_l} \geq 1 \right). 
	\]
\end{proposition}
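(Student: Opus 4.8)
The plan is to reduce the bound to a one-step conditional domination and then iterate it $m$ times. Write $\mathcal{H}_n=\sigma(X_{l,x}:1\le l\le n,\ x\in V)$; for $l\ge 1$ set $Z_l=X_{n_0+l}/X_{n_0+l-1}$ and $P_l=X_{n_0+l}/X_{n_0}=\prod_{i=1}^{l}Z_i$ (defined on the event below), and let
\[
G_k=\{\,X_{n_0}>\tilde t,\ X_{n_0+1}\ge X_{n_0},\ \dots,\ X_{n_0+k}\ge X_{n_0}\,\}
\]
be the event in question. Then $G_k$ is $\mathcal{H}_{n_0+k}$-measurable, $G_k\subseteq G_{k-1}$, one has $P_{k-1}\ge 1$ and $X_{n_0+k-1}\ge X_{n_0}>\tilde t$ on $G_{k-1}$, and $\{X_{n_0+k}\ge X_{n_0}\}=\{Z_k\ge 1/P_{k-1}\}$.

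The key step I would prove is: \emph{on $G_{k-1}$, for every $c>0$, $P(Z_k\ge c\mid\mathcal{H}_{n_0+k-1})<C\,\mathbb{P}(\tilde Y\ge c)$ almost surely.} Fix $\omega\in G_{k-1}$ and put $s=X_{n_0+k-1}(\omega)>\tilde t=t^{\ast}/\delta$. By \eqref{eq: Markov} (the renewals $R_x$ being independent across $x$) the $X_{n_0+k,x}$ with $x\ne x_{n_0+k-1}$ are conditionally independent given $\mathcal{H}_{n_0+k-1}$: if $W_{n_0+k-1,x}\ge t^{\ast}$ then $X_{n_0+k,x}\sim E(t'_x)$ with $t'_x:=W_{n_0+k-1,x}$, which satisfies $t'_x\ge t^{\ast}$ and $t'_x=s-X_{n_0+k-1,x}\le s$; while if $W_{n_0+k-1,x}<t^{\ast}$ then $X_{n_0+k,x}\sim E(t'_x)+t^{\ast}$ with $t'_x:=W_{n_0+k-1,x}+t^{\ast}\in(t^{\ast},2t^{\ast})$, and $2t^{\ast}<t^{\ast}/\delta\le s$ since $\delta<1/2$. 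In either case $t^{\ast}<t'_x\le s$ (a.s.), hence $E(t'_x)/s\le E(t'_x)/t'_x$, and since $t^{\ast}/s<\delta$, writing $\mathcal{E}_x$ for the relevant excess time of $R_x$ we get the pathwise bound
\[
Z_k^{(x)}:=\frac{X_{n_0+k,x}}{s}\ \le\ V_x:=\begin{cases}\mathcal{E}_x/t'_x,&\text{first branch,}\\[2pt]\mathcal{E}_x/t'_x+\delta,&\text{second branch,}\end{cases}
\]
where the $V_x$ are independent with respective marginal laws $E(t'_x)/t'_x$ and $E(t'_x)/t'_x+\delta$. Therefore $Z_k=\max_{x\ne x_{n_0+k-1}}Z_k^{(x)}\le\max_{x}V_x$, a maximum of $M=|V|-1$ variables of exactly the form in Proposition \ref{pr: dominacao}, with $\mathcal{H}_{n_0+k-1}$-measurable parameters $t'_x>t^{\ast}$; applying that proposition conditionally (for each realization of the $t'_x$) gives $P(\max_xV_x\in I_j\mid\mathcal{H}_{n_0+k-1})<Cp_j$ for every $j$, and summing over the $j$ with $a_j\ge c$ (recall $\mathbb{P}(\tilde Y=a_j)=p_j$) yields the key step.

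Granting the key step, I would finish by conditioning down. Since $G_m=G_{m-1}\cap\{Z_m\ge 1/P_{m-1}\}$ and $P_{m-1}$ is $\mathcal{H}_{n_0+m-1}$-measurable,
\[
P(G_m)=\mathbb{E}\bigl[\mathbb{1}_{G_{m-1}}P(Z_m\ge 1/P_{m-1}\mid\mathcal{H}_{n_0+m-1})\bigr]<C\,\mathbb{E}\bigl[\mathbb{1}_{G_{m-1}}\,\mathbb{P}(\tilde Y_m\ge 1/P_{m-1})\bigr],
\]
with $\tilde Y_m$ an independent copy of $\tilde Y$. Now use $G_{m-1}=G_{m-2}\cap\{Z_{m-1}\ge 1/P_{m-2}\}$ and $P_{m-1}=P_{m-2}Z_{m-1}$, condition on $\mathcal{H}_{n_0+m-2}$, and apply the key step to $Z_{m-1}$ against the increasing test function $z\mapsto\mathbb{1}_{\{P_{m-2}z\ge1\}}\,\mathbb{P}(\tilde Y_m\ge 1/(P_{m-2}z))$ (legitimate upon integrating the tail bound, using $C>1$; rounding each $Z_l$ up to the grid $\{a_j\}$ removes boundary subtleties). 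This removes one more factor $C$ and swaps $Z_{m-1}$ for an independent copy $\tilde Y_{m-1}$. Iterating $m$ times, with $P_0=1$ and $P(X_{n_0}>\tilde t)\le 1$, gives
\[
P(G_m)\ \le\ C^m\,\mathbb{P}\bigl(\textstyle\prod_{l=1}^{k}\tilde Y_l\ge 1\ \text{for all }k\le m\bigr)\ \le\ C^m\,\mathbb{P}\bigl(\textstyle\prod_{l=1}^{m}\tilde Y_l\ge 1\bigr),
\]
which is the claim ($m=0$ being trivial).

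I expect the key step to be the main obstacle: one must check that \emph{both} branches of \eqref{eq: Markov} produce a ratio $Z_k^{(x)}$ fitting exactly one of the two forms $E(t'_x)/t'_x$ and $E(t'_x)/t'_x+\delta$ allowed by Proposition \ref{pr: dominacao}, with $t'_x>t^{\ast}$. This is precisely where the choices $\tilde t=t^{\ast}/\delta$ and $\delta<1/2$ are used: the former gives $t^{\ast}/X_{n_0+k-1}<\delta$, and together with $\delta<1/2$ it gives $t'_x<X_{n_0+k-1}$ in the truncated branch. The remaining ingredients --- conditional independence across sites in \eqref{eq: Markov}, measurability of the $t'_x$, and the nesting of the events $G_k$ with $P_{k-1}\ge 1$ --- are routine bookkeeping.
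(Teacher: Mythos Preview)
Your proof is correct and follows essentially the same strategy as the paper's. The paper introduces auxiliary variables $Z_{n+1,x}$ (defined case-by-case exactly as your $V_x$) whose maximum $Z_{n+1}$ dominates $X_{n+1}/X_n$ on $\{X_n>\tilde t\}$ and, conditionally on the past, has precisely a law of the type covered by Proposition~\ref{pr: dominacao}; it then decomposes the event $\{Z_{n_0+1}\ge 1,\dots,\prod_{l\le m}Z_{n_0+l}\ge 1\}$ over the set of index paths $\Lambda=\{(j_1,\dots,j_m):\prod_{i\le l}a_{j_i}\ge 1\ \forall l\}$ and bounds each factor $P(Z_{n_0+l}\in I_{j_l}\mid A_l)$ by $Cp_{j_l}$, giving $\sum_{\gamma\in\Lambda}\prod_l Cp_{j_l}=C^m\mathbb{P}(\prod_l\tilde Y_l\ge 1, \forall l)$. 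Your iterated-conditioning argument with the ``round each $Z_l$ up to the grid $\{a_j\}$'' device is the same computation unwound: rounding $\max_xV_x$ up to the grid produces a grid-valued variable $W$ with $P(W=a_j\mid\mathcal H)<Cp_j$, so for any increasing $\phi\le 1$ one gets $\mathbb{E}[\phi(Z_k)\mid\mathcal H]\le\sum_j\phi(a_j)P(W=a_j\mid\mathcal H)<C\,\mathbb{E}[\phi(\tilde Y)]$, which is exactly one step of the paper's path sum. The only cosmetic difference is that the paper makes the dominating variable explicit from the start, avoiding the need to phrase the iteration via increasing test functions; your route is equally valid.
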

\begin{proof}
	For each $n \in \realmathbb{N}$ and $x \in V$, we define
	\begin{equation}\label{eq: Z}
	Z_{n+1,x} = \begin{cases} 
	\displaystyle \frac{X_{n+1,x}}{W_{n,x}}, & \mbox{if} \quad W_{n,x} \geq t^{\ast},\\ 
	\displaystyle \frac{X_{n+1,x} - t^{\ast}}{W_{n,x}+t^{\ast}} + \frac{t^{\ast}}{\Tilde{t}}, & \mbox{if} \quad  W_{n,x} < t^{\ast}.
	\end{cases}
	\end{equation}
	
	We set $Z_{n+1} = \max \{Z_{n+1,x} ~;~ x \in V \}$. Since $\delta < 1/2$ --- see Proposition \ref{pr: dominacao} ---, we have $2t^{\ast} < \Tilde{t} $. Notice also that for each $x \in V$, we have $W_{n,x} \leq X_n$. Therefore, if $X_n > \tilde{t}$ then
	\[
	Z_{n+1,x} = \begin{cases} 
	\displaystyle \frac{X_{n+1,x}}{W_{n,x}} \geq  \frac{X_{n+1,x}}{X_n}  ,  & \mbox{if} \quad W_{n,x} \geq t^{\ast},\\ 
	\displaystyle \frac{X_{n+1,x} - t^{\ast}}{W_{n,x}+t^{\ast}} + \frac{t^{\ast}}{\Tilde{t}} \geq \frac{X_{n+1,x} - t^{\ast}}{\Tilde{t}} + \frac{t^{\ast}}{\Tilde{t}} \geq  \frac{X_{n+1,x}}{X_n}, & \mbox{if} \quad  W_{n,x} < t^{\ast}.
	\end{cases}
	\]
	Therefore, $Z_{n+1} \geq X_{n+1}/X_n$ whenever $X_n > \Tilde{t}$. From whence we get that
	\begin{multline}\label{eq: conta}
	\displaystyle P\left( X_{n_0} > \Tilde{t}, ~ X_{n_0+1} \geq X_{n_0}, ~ \ldots, ~ X_{n_0+m} \geq X_{n_0}\right)\\
	= \displaystyle P\left( X_{n_0} > \Tilde{t}, ~ \frac{X_{n_0+1}}{X_{n_0}} \geq 1, ~ \ldots, ~ \prod_{l=1}^{m} \frac{X_{n_0+l}}{X_{n_0+l-1}} \geq 1\right)\\ 
	\leq \displaystyle P\left( X_{n_0} > \Tilde{t}, ~ Z_{n_0+1} \geq 1, ~ \ldots, ~ \prod_{l=1}^{m} Z_{n_0+l} \geq 1\right).
	\end{multline}
	
	Consider now the set $\Lambda = \left\{\gamma:=(j_1, \ldots, j_m) ~;~ \prod_{i=1}^{l} a_{j_i} \geq 1 ,~ \forall 1 \leq l \leq m \right\}$. We have that the last expression in \eqref{eq: conta} satisfies
	\begin{multline}\label{eq: conta2}
	\displaystyle P\left( X_{n_0} > \Tilde{t}, ~ Z_{n_0+1} \geq 1, ~ \ldots, ~ \prod_{l=1}^{m} Z_{n_0+m} \geq 1\right)\\ 
	\leq \displaystyle \sum_{\gamma \in \Lambda} P\left( X_{n_0} > \Tilde{t}, ~ Z_{n_0+1} \in I_{j_1}, ~ \ldots, ~ Z_{n_0+m} \in I_{j_m} \right)\\
	= \displaystyle \sum_{\gamma \in \Lambda} \left[ \prod_{l=1}^{m} P \left( Z_{n_0+l} \in I_{j_l} ~\big\vert~ A_l \right) \right],
	\end{multline}
	where $A_l = \{ X_{n_0} > \Tilde{t}, ~ Z_{n_0+1} \in I_{j_1}, ~ \ldots, ~ Z_{n_0+l-1} \in I_{j_{l-1}} \}$.
	
	To simplify the notation, we define the random vector $\xi: \Omega \rightarrow \realmathbb{R}^{(n_0-l-1)|V|}$, denoted by, $\xi = (\xi_{m,x})_{\{1 \leq m \leq n_0 + l -1,~ x \in V\}}, $ where $\xi_{m,x}(\omega) = W_{m,x}(\omega)$.
	Notice that
	$A_l$ is measurable in the $\sigma$-algebra generated by $\xi$.
	Let $\psi$ denote the function associating $\xi$ to $(X_{n_0}, ~ Z_{n_0+1}, ~ \ldots, ~ Z_{n_0+l-1})$,
	and make $\mathcal{R}_l=(\tilde t,\infty)\times I_{j_1}\times\ldots\times I_{j_{l-1}}$. 
	Let $\tilde F$ denote the distribution function of $\xi$.
	Thus, 
	\begin{eqnarray}\label{eq: Al}
	\nonumber P\left( \{ Z_{n_0+l} \in I_{j_l} \} \cap A_l \right) &=&  \int_{\psi^{-1}(\mathcal{R}_l)} P\left(Z_{n_0+l} \in I_{j_l}  ~|~ \xi=y \right) d\tilde F(y)\\
	&=& \int_{\psi^{-1}(\mathcal{R}_l)} P \left( \max_{x \in V} Z_{n_0+l,x} \in I_{j_l} ~ \bigg\vert ~ \xi = y \right) d\tilde F(y).
	\end{eqnarray}
	
	Since $A_l \subset \{X_{n_0+l-1} > \Tilde{t}\}$, using the Markov Property described in \eqref{eq: Markov} and the definition of $Z_{n+1}$ given in \eqref{eq: Z}, we obtain
	\begin{equation}\label{eq: V}
	P \left( \max_{x \in V} Z_{n_0+l,x} \in I_{j_l} ~ \bigg\vert ~ \xi = y \right)
	= P \Big( \max \big\{V_x(y) ~;~ x \in V, ~ x \neq x_{n_0+l-1} \big\} \in I_{j_l} \Big),
	\end{equation}
	where
	\[
	V_x(y) \sim \begin{cases} 
	\displaystyle \frac{E\big(y_{n_0+l-1,x}\big)}{y_{n_0+l-1,x}}, & \mbox{if } \quad y_{n_0+l-1,x} \geq t^{\ast},\\ 
	\displaystyle \frac{E\big(y_{n_0+l-1,x} + t^{\ast}\big)}{y_{n_0+l-1,x} + t^{\ast}} + \frac{t^{\ast}}{\Tilde{t}}, & \mbox{if } \quad  y_{n_0+l-1,x} < t^{\ast}.
	\end{cases}
	\]
	Recalling that  $\delta = t^{\ast}/\Tilde{t}$, note that the variables $V_x(\omega)$, $x \in V\setminus \{x_{n_0 + l -1}\}$, satisfy the conditions of Proposition  \ref{pr: dominacao} with $M = |V|-1$. Hence, for all $y \in \psi^{-1}(\mathcal{R}_l)$, we have 
	\[
	P \Big( \max \big\{V_x(y) ~;~ x \in V, ~ x \neq x_{n_0+l-1} \big\} \in I_{j_l} \Big) < Cp_{j_l}.
	\]
	Replacing this in \eqref{eq: Al} and \eqref{eq: V}, we get
	\[
	P\left( \{ Z_{n_0+l} \in I_{j_l} \} \cap A_l \right) \leq \int_{\psi^{-1}(\mathcal{R}_l)} Cp_{j_l} d\tilde F(y) 
	= Cp_{j_l} P(\xi\in\psi^{-1}(\mathcal{R}_l))= Cp_{j_l} P(A_l).
	\]
	Thus,  \eqref{eq: conta} and  \eqref{eq: conta2} yield
	\begin{multline}\label{eq: cota}
	\displaystyle P\left( X_{n_0} > \Tilde{t}, ~ X_{n_0+1} \geq X_{n_0}, ~ \ldots, ~ X_{n_0+m} \geq X_{n_0}\right)\\
	\leq \displaystyle \sum_{\gamma \in \Lambda} \left[ \prod_{l=1}^{m} P \left( Z_{n_0+l} \in I_{j_l} ~\big\vert~ A_l \right) \right]\\
	\leq \displaystyle \sum_{\gamma \in \Lambda} \left( \prod_{l=1}^{m} Cp_{j_l} \right) = C^{m} \displaystyle \sum_{\gamma \in \Lambda} \left( \prod_{l=1}^{m} p_{j_l} \right).
	\end{multline}
	
	Recalling the definition  of  $\Tilde{Y}$ in \eqref{eq: dominad}, since $\{\Tilde{Y}_ i \}_{i \in \realmathbb{N}}$ are i.i.d. with same distribution as $\Tilde{Y}$, we have that
	\begin{equation}\label{eq: cotafinal}
	\displaystyle \sum_{\gamma \in \Lambda} \left( \prod_{l=1}^{m} p_{j_l} \right) = \sum_{\gamma \in \Lambda}  \mathbb{P} \left( \Tilde{Y}_1 = a_{j_1}, \ldots, \Tilde{Y}_m= a_{j_m} \right) \leq \mathbb{P} \left( \prod_{l=1}^m \Tilde{Y_l} \geq 1 \right),
	\end{equation}
	and \eqref{eq: cota} and \eqref{eq: cotafinal} yield the proof.
\end{proof}

\subsection{Proof of item (1) of Theorem \ref{th: principal}}
\noindent Since $\mathbb{E}[\Tilde{Y}^{\theta}] \leq \mu/C$ --- see paragraph of \eqref{eq: dominad} --- , we have
\[
\mathbb{P} \left( \prod_{l=1}^m \Tilde{Y_l} \geq 1 \right) = \mathbb{P} \left( \prod_{l=1}^m \Tilde{Y_l}^{\theta} \geq 1 \right) \leq \mathbb{E}\left[ \Tilde{Y}^{\theta} \right]^{m} \leq \left(\frac{\mu}{C}\right)^{m}.
\]
Therefore, recalling that $\mu < 1$, it follows from the Proposition \ref{pr: domination} that 
\begin{multline*}
\displaystyle P\left( X_{n_0} > \Tilde{t}, ~ X_{n_0+l} \geq X_{n_0}, ~ \forall l \in \realmathbb{N} \right)\\
=      \displaystyle \lim_{m \rightarrow \infty} P\left( X_{n_0} > \Tilde{t}, ~ X_{n_0+1} \geq X_{n_0}, ~ \ldots, ~ X_{n_0+m} \geq X_{n_0}\right)\\ \leq \lim_{m \rightarrow \infty} \mu^{m} = 0.
\end{multline*}
Hence,
\[
\displaystyle P \left( \lim_{n \rightarrow \infty} X_n = \infty \right) \leq \displaystyle P \left( \bigcup_{n_0 \geq 1} \displaystyle \left\{ X_{n_0} > \Tilde{t}, ~ X_{n_0+l} \geq X_{n_0}, ~ \forall l \in \realmathbb{N} \right\} \right) = 0.
\]

It follows, as noted above --- see paragraph of \eqref{eq: continencia} ---,
that $P( \zeta_t \neq \emptyset, \forall t>0) = 0$ for every $\lambda > 0$.


\section{Appendix}

\begin{proposition}
	Let $Y$ be defined as in \eqref{eq: Y}, if $M \in \realmathbb{N}$ is such that  
	\[
	M < 1+ \frac{2\alpha -1}{(1-\alpha)(2-\alpha)},
	\]
	then $\mathbb{E}[\log(Y)] < 0$.
\end{proposition}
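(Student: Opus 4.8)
The plan is to reduce $\mathbb{E}[\log Y]$ to an explicit integral and then evaluate it. The substitution $u=y/(1+y)$ identifies the density \eqref{eq: densidade} as the law of $U/(1-U)$ for $U\sim\mathrm{Beta}(1-\alpha,\alpha)$; since $u\mapsto u/(1-u)$ is increasing, $Y\overset{d}{=}U_{(M)}/(1-U_{(M)})$ where $U_{(M)}=\max\{U_1,\dots,U_M\}$ and the $U_i$ are i.i.d.\ $\mathrm{Beta}(1-\alpha,\alpha)$, so that
\[
\mathbb{E}[\log Y]=\mathbb{E}[\log U_{(M)}]-\mathbb{E}\big[\log(1-U_{(M)})\big].
\]
Let $F(u)=I_u(1-\alpha,\alpha)$ denote the common (regularized incomplete Beta) c.d.f.\ of the $U_i$, so that $U_{(M)}$ has c.d.f.\ $F^M$. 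Applying the layer-cake formula to the almost surely negative variables $\log U_{(M)}$ and $\log(1-U_{(M)})$ gives
\[
\mathbb{E}[\log Y]=\int_0^1\frac{1-F(u)^M}{1-u}\,du-\int_0^1\frac{F(u)^M}{u}\,du=:h(\alpha,M),
\]
both integrals being finite because $F(u)\sim c\,u^{1-\alpha}$ as $u\to0^+$ (hence $F(u)^M/u$ is integrable, $M(1-\alpha)>0$) and $1-F(u)\sim c'(1-u)^{\alpha}$ as $u\to1^-$.

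Two steps remain: (i) evaluate $h(\alpha,M)$ in closed form, and (ii) decide its sign. For (i), expand $F^M=\big(1-(1-F)\big)^M$, equivalently use the inclusion--exclusion identity $\max_i\log Y_i=\sum_{\emptyset\neq S}(-1)^{|S|+1}\min_{i\in S}\log Y_i$, to reduce everything to minima of $k\le M$ i.i.d.\ copies of $Y_1$; those are handled through the single--variable Mellin transform $\mathbb{E}[Y_1^{\,t}]$, which is an explicit ratio of sines on $t\in(\alpha-1,\alpha)$ (so that, e.g., the $M=1$ instance gives $h(\alpha,1)=\mathbb{E}[\log Y_1]=\pi\cot(\pi\alpha)<0$), together with Beta integrals; assembling the pieces yields a closed form for $h(\alpha,M)$. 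For (ii), differentiating under the integral sign one finds $\partial_M h(\alpha,M)=-\int_0^1 F(u)^M\log F(u)\,\frac{du}{u(1-u)}>0$ (the integrand is nonnegative since $0<F<1$, with integrability at both endpoints checked as above), so $h(\alpha,\cdot)$ is strictly increasing; thus it suffices to verify $h\big(\alpha,\,1+\tfrac{2\alpha-1}{(1-\alpha)(2-\alpha)}\big)\le 0$. Using the partial-fraction identity $\tfrac{2\alpha-1}{(1-\alpha)(2-\alpha)}=\tfrac{1}{1-\alpha}-\tfrac{3}{2-\alpha}$, this should collapse to an elementary inequality in $\alpha$.

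The main obstacle is step (i): the maximum of i.i.d.\ heavy-tailed variables has no simple Mellin transform, so one must carry the inclusion--exclusion (equivalently, the integration of $F(u)^M$ against $du/u$ and $du/(1-u)$) all the way through, keeping careful track of the convergence of every piece, and, crucially, arranging that the threshold come out to be exactly $1+\tfrac{2\alpha-1}{(1-\alpha)(2-\alpha)}$ and not that value up to an additive constant --- this exactness is what makes the lower bound in Theorem~\ref{th: principal} as sharp as claimed. As a consistency check, at $\alpha=1/2$ the threshold degenerates to $M^{\ast}=1$, matching $h(1/2,1)=\pi\cot(\pi/2)=0$.
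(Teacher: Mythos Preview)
Your Beta reparametrization and layer-cake formula are correct, and your monotonicity observation $\partial_M h(\alpha,M)>0$ is valid and useful. But the proof is not complete: you yourself flag ``the main obstacle is step (i)'' and then, for step (ii), only assert that the threshold value ``should collapse to an elementary inequality in $\alpha$.'' Neither step is carried out. The inclusion--exclusion route you sketch does not obviously close: writing $\max_i\log Y_i$ as an alternating sum over $\min_{i\in S}\log Y_i$ replaces one order statistic by another, and the minimum of $k$ i.i.d.\ copies of $Y_1$ is \emph{not} governed by the single-variable Mellin transform $\mathbb{E}[Y_1^{\,t}]$; its law involves $(1-F_1)^k$, which in the Beta picture is another incomplete Beta, not a Beta power. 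So there is no evident mechanism by which the pieces ``assemble'' into a closed form producing exactly the threshold $1+\frac{2\alpha-1}{(1-\alpha)(2-\alpha)}$. As written, what remains to be proved is essentially the whole content of the proposition.

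The paper's argument is quite different and avoids closed forms altogether. Writing $g(x)=\int_0^x t^{-\alpha}(1+t)^{-1}\,dt$, one has
\[
\mathbb{E}[\log Y]\ \propto\ \int_0^\infty \frac{\log x}{x^\alpha(1+x)}\,g(x)^{M-1}\,dx,
\]
and the substitution $x\mapsto 1/x$ on the interval $(0,1)$ folds the integral onto $(1,\infty)$, giving
\[
\mathbb{E}[\log Y]\ \propto\ \int_1^\infty \frac{\log x}{1+x}\left[\frac{g(x)^{M-1}}{x^{\alpha}}-\frac{g(1/x)^{M-1}}{x^{1-\alpha}}\right]dx.
\]
Thus it suffices to show the bracket is negative for every $x>1$, i.e.\ $g(x)/g(1/x)<x^{(2\alpha-1)/(M-1)}$. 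The hypothesis on $M$ gives $(2\alpha-1)/(M-1)>(1-\alpha)(2-\alpha)$, so one is reduced to the single inequality $g(x)<x^{(1-\alpha)(2-\alpha)}\,g(1/x)$ for $x>1$, which is then verified by checking that the difference vanishes at $x=1$ and has negative derivative, using only the elementary bound $g(1/x)>\frac{1}{(1-\alpha)x^{1-\alpha}}-\frac{1}{(2-\alpha)x^{2-\alpha}}$. This symmetry-and-derivative argument is what makes the threshold $1+\frac{2\alpha-1}{(1-\alpha)(2-\alpha)}$ appear exactly, with no special-function identities required. Your monotonicity-in-$M$ reduction is compatible with this approach, but the missing ingredient is precisely a pointwise comparison of this kind; the Mellin/inclusion--exclusion route does not supply it.
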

\begin{proof}
	Given $x > 0$, we have $P(Y \leq x) = P(Y_1 \leq x)^{M}$. Taking derivatives with respect to  $x$, we get that $Y$ has density $MC_{\alpha}^{M-1}g(x)^{M-1}f(x)$, where $f(x)$ is given by \eqref{eq: densidade} and
	\[
	g(x) = \begin{cases} 
	0, & \mbox{se } \quad x \leq 0,\\ 
	\int_{0}^{x} \frac{1}{t^{\alpha}(1+t)} dt, & \mbox{se } \quad x > 0.
	\end{cases}
	\]
	Hence, we have that
	\[
	\displaystyle \mathbb{E}[\log(Y)] = MC_{\alpha}^{M} \int_{0}^{\infty} \log(x) \frac{g(x)^{M-1}}{x^{\alpha}(x+1)} dx.
	\]
	Making the change of variables $u = 1/x$, we get
	\[
	\displaystyle  \int_{0}^{1} \log(x) \frac{g(x)^{M-1}}{x^{\alpha}(x+1)} dx = - \int_{1}^{\infty} \log(x) \frac{g(1/x)^{M-1}}{x^{1 -\alpha}(x+1)} dx.
	\]
	It readily follows that 
	\[
	\displaystyle \mathbb{E}[\log(Y)] = MC_{\alpha}^{M} \int_{1}^{\infty} \frac{\log(x)}{x+1} \left[ \frac{g(x)^{M-1}}{x^{\alpha}} - \frac{g(1/x)^{M-1}}{x^{1-\alpha}} \right] dx.
	\]
	It is sufficient to show that the term in brackets is negative whenever $x>1$. This is equivalent to 
	\[
	\frac{g(x)}{g(1/x)} \leq x^{\frac{2\alpha -1}{M-1}}, ~\forall x >1. 
	\]
	For simplicity, let $\beta = 1 -\alpha$. It follows from the hypothesis that, for $x>1$, 
	\[
	x^{\frac{2\alpha -1}{M-1}} \geq x^{(1-\alpha)(2-\alpha)} = x^{\beta(\beta +1)}.
	\]

	We define the auxiliary function $G: \realmathbb{R} \to \realmathbb{R}$, given by
	$G(x) = g(x) - x^{\beta(\beta +1)}g(1/x).$
	Thus, we have that $G(1) = 0$ and its derivative is 
	\begin{equation}\label{G'} 
	G'(x) = \frac{1 + x^{\alpha + \beta^2} - \beta(\beta +1) g(1/x) x^{\beta^2}(1+x)}{x^{\alpha}(1+x)}.
	\end{equation}
	
	Observe that, since for $0 < t \leq 1$ we have $1/(1+t) > 1 - t$, then, for all $x \geq 1$,
	\begin{eqnarray*}
		\nonumber   g(1/x) = \int_{0}^{1/x} \frac{1}{t^\alpha(1+t)}dt &>& \frac{1}{\beta x^{\beta}} - \frac{1}{(\beta +1) x^{\beta + 1}}\\ &=& \frac{x(\beta + 1) - \beta}{x^{\beta + 1}\beta(\beta +1)}.
	\end{eqnarray*}
	Hence $\beta(\beta +1)g(1/x) > [x(\beta +1) -\beta]/x^{\beta+1}$. To conclude that $G'(x) < 0$ for all $x > 1$, as stated in \eqref{G'}, it is enough to show that  
	\[
	\frac{x(\beta + 1) - \beta}{x^{\beta + 1}} > \frac{1+x^{\alpha + \beta^{2}}}{x^{\beta^2}(1+x)}, ~ \forall x > 1.
	\]
	Or equivalently, 
	\begin{multline*}
	x^{\beta^2}(1+x)x(\beta +1) - x^{\beta^2}(1+x)\beta - x^{\beta +1}(1+ x^{\alpha + \beta^2})\\ 
	=  x^{\beta^2}[(1+\beta)(x^2+x) - \beta(1+x) - (x^2 + x^{1+\beta - \beta^2})]\\
	=  x^{\beta^2}[x^2 + x + \beta(1+x)(x-1) - (x^2 + x^{1+\alpha\beta})]\\
	=  x^{\beta^2}[x+\beta(x+1)(x-1) - x^{1+\alpha\beta}] > 0, ~\forall x > 1,
	\end{multline*}
	Since the last inequality is true, we have $G'(x) < 0$ for all $x > 1$, and since $G(1)=0$, we conclude that $G(x) < 0$, for all $x >1$. The proof is finished.
\end{proof} 

\bigskip

\noindent{\bf Acknowledgements}
{L. R. Fontes acknowledges
	support of CNPq (grant 311257/2014-3), and FAPESP (grant 2017/10555-0). 
	P. A. Gomes acknowledges support of CAPES.
	R. Sanchis acknowledges support of
	CAPES, CNPq and
	FAPEMIG (Programa Pesquisador Mineiro).}

\bigskip




\vspace{.75cm}



\end{document}